\newtheorem{theorem}{Theorem}[section]
\newtheorem{corollary}[theorem]{Corollary}
\newtheorem{definition}[theorem]{Definition}
\newtheorem{lemma}[theorem]{Lemma}
\theoremstyle{remark}
\newtheorem{remark}[theorem]{Remark}
\newtheorem{example}[theorem]{Example}
\newcommand{\vanish}[1]{}\parskip=12pt
\newcommand{\rank}{\mbox{rank}}
\newcommand{\block}{\mbox{block}}
\newcommand{\sing}{\mbox{sing}}
\begin{document}
\title[The toric h-vector of a cubical complex]{The toric h-vector of a cubical complex in terms of noncrossing partition statistics} 
\author{Sarah Birdsong, G\'abor Hetyei}
\address{Department of Mathematics and Statistics, UNC Charlotte, 
	Charlotte, NC 28223}
\email{ghetyei@uncc.edu, sjbirdso@uncc.edu}
%\thanks{}
\subjclass[2000]{Primary 52B05; Secondary 05A15 06A07}
\keywords{toric h-vector, Adin h-vector, cubical complex, noncrossing partition}   
\date{\today}
\begin{abstract}
This paper introduces a new and simple statistic on noncrossing partitions that
expresses each coordinate of the toric $h$-vector of a cubical complex,
written in the basis of the Adin $h$-vector entries, as the total weight of all
noncrossing partitions. The same model may also be used to obtain a very
simple combinatorial interpretation of the contribution of a cubical shelling
component to the toric $h$-vector.  In this model, a strengthening
of the symmetry expressed by the Dehn-Sommerville equations
may be derived from the self-duality of the noncrossing partition lattice,
exhibited by the involution of Simion and Ullman.
\end{abstract}
\maketitle

%-------------------------------------------------------------------------------------------

\section*{Introduction}

There are three important $h$-vectors associated to a cubical complex,
each preserving some properties of the simplicial $h$-vector:
the triangulation $h$-vector, the toric $h$-vector, and Adin's
enigmatic cubical $h$-vector~\cite{Adin}. The triangulation
$h$-vector is used to express the Hilbert series of
the face ring of a cubical complex~\cite{Hetyei-strc}, and the toric
$h$-vector arises as a specialization of Stanley's general definition
made for all lower Eulerian posets~\cite{Stanley-hvector}. 
Adin's (long) cubical $h$-vector is obtained by a simple but mysterious algebraic operation
from his short $h$-vector, which is just the sum of the (simplicial)
$h$-vectors of the vertex figures. Adin's cubical $h$-vector has the remarkable property of
being ``smaller'' than the other two $h$-vectors in the sense that the
triangulation and toric $h$ entries are positive linear combinations of
the Adin $h$ entries. As it was observed in~\cite{Hetyei-cube}, the same
holds for any ``reasonably defined'' cubical $h$-vector that one could
invent: given any linear combination of the face numbers that is
nonnegative on the cube and weakly increases after adding any cubical
shelling component, the said invariant may be expressed as a nonnegative
linear combination of the Adin $h$ entries. 

Knowing that expressing the other two $h$-vectors in terms of the Adin
$h$ entries involves positive and, after proper scaling, integer
coefficients presents the challenge of finding a combinatorial
interpretation of these coefficients. For the triangulation $h$-vector,
such an interpretation was found by Haglund~\cite{Haglund}. The present
paper provides a combinatorial model interpreting the coefficients used
to express the toric $h$-vector. This model is a weighted enumeration  
model for noncrossing partitions, where a weight of $x$ is assigned to
all nonsingleton blocks as well as to singleton blocks and pairs of consecutive
elements in the same block, provided they are found at prescribed
positions. The model may also be used to provide a new interpretation of
the contribution of each cubical shelling component to the toric
$h$-vector. The first such combinatorial model was constructed by
Chan~\cite{Chan}; a similar second model and additional explicit
formulas may be found in~\cite{Hetyei-2nd}. The present model is
simpler and allows for a combinatorial proof of some
identities satisfied by the toric contributions of cubical shelling
components, which strengthen the symmetry expressed by the Dehn-Sommerville
equations. These equalities
were hitherto unobserved in the literature and do not seem to be obvious
when glancing at the models proposed in \cite{Chan}
and \cite{Hetyei-2nd}. In the model presented in this paper, the equalities are shown
combinatorially, using an involution introduced by Simion and
Ullman~\cite{Simion-Ullman}, which exhibits the self-duality of the
noncrossing partition lattice.  

This paper is structured as follows. In the Preliminaries, the reader is reminded of the
information that will be needed about the toric and Adin $h$-vectors. In
Section~\ref{sec:Qpoly}, the contributions $Q_{d,k}(x)$ of the Adin $h$
entries to the toric $f$ polynomial of a $d$-dimensional cubical
complex are computed. 
In Section~\ref{sec:shell}, the contribution $C_{d,i,j}(x)$ of a shelling component 
of type $(i,j)$ to the toric $f$ polynomial of a $d$-dimensional shellable cubical complex
is expressed in terms of the polynomials $Q_{d,k}(x)$.
This provides a new explicit expression for these polynomials, which were
first computed in~\cite{Hetyei-2nd}. The new combinatorial model and the
main results may be found in Section~\ref{sec:cint}. Here the
polynomials $Q_{d,k}(x)$ and $C_{d,i,j}(x)$ are expressed as the total weight of
noncrossing partitions. Finally, in Section~\ref{sec:dual}, the
Simion-Ullman involution is used to derive some immediate and some not
so trivial consequences of the Dehn-Sommerville equations for the
polynomials $Q_{d,k}(x)$ and $C_{d,i,j}(x)$. 

This work underscores the importance of the study of noncrossing
partitions in understanding the toric $h$-vector of lower Eulerian
posets. A recent result of the second author~\cite{Hetyei-short},
finding that the toric $h$-vector of a simple polytope may be expressed
using close relatives of the Narayana numbers, seems to be pointing in the same
direction.

%-------------------------------------------------------------------------------------------
\section{Preliminaries}
\subsection{Toric polynomials of an Eulerian poset}
\label{sec:toricH}

A poset is graded if it has a unique minimum element $\hat{0}$, 
a unique maximum element $\hat{1}$, and a rank function.  
The poset is also Eulerian if for any open interval $(x,y)$ the number 
of elements at odd and even ranks are equal.   Suppose $\hat{P}$ is 
an arbitrary Eulerian poset, and 
define $P=\hat{P} \backslash \{\hat{1}\}$.  Stanley~\cite{Stanley-hvector} 
defined the toric $h$-vector for Eulerian posets using two recursively 
defined polynomials $f(P,x)$ and $g(P,x)$ as follows:
\begin{enumerate}
\item $f(\emptyset, x)=g(\emptyset, x)=1$,
\item if $\hat{P}$ has rank $d+1 \ge1$ and $f(P,x)=k_0+k_1 x+ \cdots + k_d x^d$, 
then $g(P,x)=\sum_{i=0}^{m} (k_i - k_{i-1})x^i$ where $m=\lfloor \frac {\rank(P)}{2} \rfloor$ and $k_{-1}=0$, and
\item if $\hat{P}$ has rank $d+1 \ge1$, then 
$f(P,x)=\sum_{t \in P} g([\hat{0},t),x)(x-1)^{d-\rank(t)}$.
\end{enumerate}
Set $h_i=k_{d-i}$ for each $i$.  Then the toric $h$ polynomial is $h(P,x)=\sum h_i x^i$.  
By the generalized Dehn-Sommerville equations~\cite[Theorem 2.4]{Stanley-hvector},
$h_i = h_{d-k}$.  As a result, $h(P,x)=f(P,x)$ for Eulerian posets. 

Stanley extended the definition of the $f$ polynomial in $(3)$ to 
lower Eulerian posets.  This was possible since the definition uses
half open intervals $[\hat{0},t)$ where $t$ is any element in the poset, and
$d$ is the length of the longest chain in the poset.
 
The face poset of a {\em polyhedral complex} is a specific type of lower Eulerian poset.  
This paper uses the definition adapted to polyhedral complexes as stated by Billera, Chan, and Liu~\cite{Billera-Chan-Liu}.  

\begin{definition}[Billera-Chan-Liu]
\label{def-BCL}
Let ${\mathcal P}$ be a d-dimensional polyhedral complex and F any face of ${\mathcal P}$.  Then the toric f and g polynomials are defined by the following three rules:
\begin{enumerate}
\item $f(\emptyset, x)=g(\emptyset, x)=1$,
\item $f({\mathcal P},x)=\sum_{F \in {\mathcal P}} g(\partial F,x)(x-1)^{d-dim(F)}$, and
\item $g({\mathcal P},x)=\sum_{i=0}^{m} (k_i - k_{i-1})x^i$ where $k_i$ is the coefficient of $f({\mathcal P},x)$,
$k_{-1}=0$, and $m=\lfloor \frac {d+1}{2} \rfloor$.
\end{enumerate}
\end{definition}

Set $h(\mathcal{P},x)=\sum_i h_i x^i = x^{d+1}f(\mathcal{P},1/x)$.  
The toric $h$ polynomial is a degree $d+1$ polynomial, and the toric 
$h$-vector is comprised of the coefficients of the toric $h$ polynomial.  

If $P$ is the face complex of a convex polytope and $\partial P$ is its boundary 
complex, then $h(P,x)=g(\partial P,x)$~\cite[Corollary 1.1]{Hetyei-2nd}.
For example, the $h$ polynomial of a $d$-dimensional cube is the $g$ polynomial of its 
boundary as noted in~\cite{Billera-Chan-Liu} and proved by Stanley~\cite{Stanley-hvector};
see Table~\ref{table:gpoly}.  
Gessel~\cite{Stanley-hvector} showed that 
$g(L_d,x)=\sum_{k=0}^{\lfloor d/2 \rfloor} \frac{1}{d-k+1}{d \choose k}{2d-2k \choose d}(x-1)^k$, 
and Hetyei~\cite{Hetyei-2nd} pointed out that this was equivalent to 
$\sum_{k=0}^{\lfloor d/2 \rfloor} C_{d-k}{d-k \choose k}(x-1)^k$.

\begin{table}[h]
\caption{The $g$ polynomials of the $d$-cube for small $d$} % title of Table
\centering % used for centering table
\begin{tabular}{c l} % centered columns (7 columns)
\hline\hline %inserts double horizontal lines
$d$ & $g(L_d,x)$ \\ [0.5ex] % inserts table
%heading
\hline % inserts single horizontal line
$-1$ &  $1$  \\
$0$ &  $1$   \\ % inserting body of the table
$1$ &  $1$  \\
$2$ &  $1+x$  \\
$3$ &  $1+4x$  \\
$4$ &  $1+11x+2x^2$  \\ [1ex] % [1ex] adds vertical space
\hline %inserts single line
\end{tabular}
\label{table:gpoly} % is used to refer this table in the text
\end{table}

%-------------------------------------------------------------------------------------------
\subsection{The Adin $h$-vector}
\label{sec:adinH}

Adin~\cite{Adin} called his $h$-vector for cubical complexes
the (long) cubical $h$-vector and defined it in terms of a short cubical $h$-vector, 
denoted $h_i^{(c)}$ and $h_i^{(sc)}$, respectively.  In this paper, Adin's (long) 
cubical $h$-vector will be referred to as the cubical $h$-vector or simply as the Adin $h$-vector.
Let $\mathcal{P}$ be a $d$-dimensional cubical complex with $f$-vector 
$(f_0, \ldots, f_d)$.  Then Adin's (long) $h$-vector is defined by the equations
\begin{equation}
h_i^{(sc)}=\sum_{j=0}^{i} {d-j \choose d-i}(-1)^{i-j}2^j f_j  \mbox{  for $0 \le i \le d$ and}
\end{equation}
\begin{equation}
h_i^{(sc)}=h_i^{(c)}+h_{i+1}^{(c)}  \mbox{  for $0 \le i \le d$}
\end{equation}
with the initial and last values 
\begin{equation*}
h_0^{(c)}=2^d,
\end{equation*}
\begin{equation*}
h_1^{(c)}=f_0-2^d, \mbox{ and}
\end{equation*}
\begin{equation*}
h_{d+1}^{(c)}=(-2)^d \tilde{\chi}(\mathcal{P})
\end{equation*}
where $\tilde{\chi}(\mathcal{P})$ is the Euler characteristic of 
$\mathcal{P}$; i.e., $\tilde{\chi}(\mathcal{P})=\sum_{j=0}^{d+1} (-1)^{j-1} f_{j-1}$.  
Writing the $f$-vector in terms of the cubical $h$-vector yields
\begin{equation*}
f_{j-1}=2^{1-j}\sum_{i=1}^j {d+1-i \choose d+1-j}[h_i^{(c)}+h_{i-1}^{(c)}]  \mbox{  for $1 \le j \le d+1$}.
\end{equation*}
For example, the boundary complex of the $n$-dimensional cube has a cubical 
$h$-vector of $h_0^{(c)}= \cdots = h_n^{(c)}=2^{n-1}$.

For ease of computation, normalize the Adin $h$-vector by dividing each
$h_i^{(c)}$ by $2^d$.  Then drop the superscript $(c)$.  This results 
in $h_0=1$ and the relation
\begin{equation}
\label{adinF}
f_j=2^{d-j}\sum_{i=0}^j {d-i \choose d-j}[h_{i+1}+h_i]  \mbox{  for $0 \le j \le d$}.
\end{equation}

Hetyei~\cite[Theorem 1]{Hetyei-cube} showed that any invariant $I$ of a 
$d$-dimensional cubical complex, which can be expressed as a linear 
combination of the face numbers $f_0, \ldots, f_d$, may be rewritten as a 
nonnegative linear combination of the normalized cubical $h$-vector
coordinates if and only if $I$ is nonnegative when applied to the $d$-cube and 
adding a facet of type $(1,j)$ or $(0,d)$ in any shelling does not decrease $I$.  
For the definition of shellings and cubical shelling component types, see 
Section~\ref{sec:shell}.

Due to this result, coordinates of the other $h$-vectors such as the toric $h$-vector~\cite{Hetyei-cube} and the triangulation $h$-vector~\cite{Haglund} may be
written as nonnegative linear combinations of the coordinates of the cubical
$h$-vector.  
Hence, the cubical $h$-vector is the ``smallest'' $h$-vector.  
In Section~\ref{sec:Qpoly}, this information is used to express the toric $h$ polynomial
in terms of the Adin $h$-vector.  Then several properties of the resulting 
coefficients are examined.

%-------------------------------------------------------------------------------------------
\subsection{Noncrossing partitions}
\label{sec:NC}

Suppose $\pi$ is a partition of $[1,d]:=\{1,\ldots,d\}$.  If whenever $1\le a<b<c<e\le d$
where $a$ and $c$  are in the same block of the partition and $b$ and $e$ are also in one
block implies that all four elements must be in the same block of $\pi$, the 
partition $\pi$ is called noncrossing.  Let $NC(d)$ denote the set of all 
noncrossing partitions of $[1,d]$.  
Then $|NC(d)|=C_d=\frac{1}{d+1}{2d \choose d}$; see~\cite{Stanley-web} or~\cite{Stanley-EC2}.

There are several ways to represent (noncrossing) partitions visually on
$[1,d]$.   
The two representations used in this paper are linear and circular.  To create a linear 
representation or an arc diagram (see Fig.~\ref{fig:arc04}), place $d$ consecutive points 
in a horizontal row.  Label the points $1$ through $d$, and connect consecutive 
elements in a block of the partition with an arc.  If there are $k$ elements in the block, 
there will be $k-1$ linked arcs representing the block in the arc diagram.  A singleton block 
will have no arcs. 

\begin{figure}[h]
\includegraphics{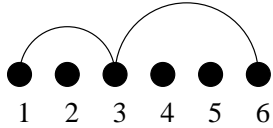}
\caption{The arc diagram of $\pi=(136)(2)(4)(5)$}
\label{fig:arc04}
\end{figure}

To create a circular representation (see Fig.~\ref{fig:circ01}), place $d$ points around a circle.  
Label the points $1$ through $d$, increasing clockwise.  Connect any two 
(cyclically) consecutive elements in a block of the partition with a chord.  In the circular representation, 
nonsingleton blocks are chords or polygons.
A partition of $[1,d]$ is noncrossing if and only if no arc or chord crosses
another in the linear and circular representations of the partition.  

\begin{figure}[h]
\includegraphics{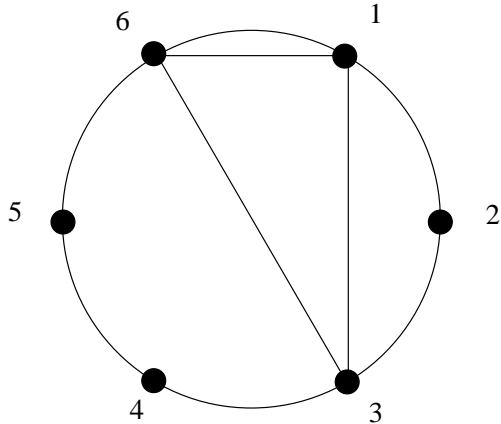}
\caption{The circular representation of $\pi=(136)(2)(4)(5)$}
\label{fig:circ01}
\end{figure}

Kreweras~\cite{Kreweras} showed that $NC(d)$ forms a lattice ordered by
refinement.  Suppose $\pi, \sigma \in NC(d)$.  Then $\pi \le \sigma$ under 
refinement if every block of $\pi$ is contained in a block of $\sigma$.  For
example, $(12)(347)(56) \le (1256)(347)$.  Kreweras also used what later 
became known as the Kreweras complement to show that the lattice formed
by NC(d) is self-dual.

Simion and Ullman~\cite{Simion-Ullman} gave an alternative proof for the self-duality of $NC(d)$,
which used the circular representation of partitions and the involution they defined, namely 
$\alpha: NC(d) \to NC(d)$.  This involution will be used throughout the rest of the paper.

The involution $\alpha(\pi)$ is defined in 
the following way.  Take any $\pi \in NC(d)$, and represent it circularly.  Label the 
midpoint of the arc determined by $(d-1,d)$ as $1'$.  Subdivide each arc $(i,i+1)$ 
by placing a new point between the existing points on the circle. 
Label the new point $(d-i)'$.  $1'$ through $d'$ increase counterclockwise, and $d'$ is the 
midpoint of the arc $(d,1)$.  Then $\alpha(\pi)$ is represented on the same circle as $\pi$ by 
the coarsest noncrossing partition of $[1',d']$ that does not intersect any of the chords of $\pi$.
In Fig.~\ref{fig:circ03}, $\pi$ is represented by the solid lines and filled circles 
and $\alpha(\pi)$ by the dashed lines and open circles. 

\begin{figure}[h]
\input{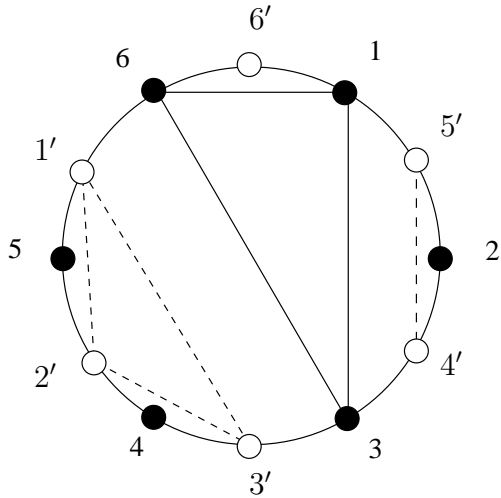}
\caption{The circular representation of $\pi=(136)(2)(4)(5)$ and $\alpha(\pi)=(123)(45)(6)$}
\label{fig:circ03}
\end{figure}

See Simion and Ullman~\cite{Simion-Ullman} for an alternative version of 
the definition of $\alpha(\pi)$, which does not use the circular representation 
of a partition.
For the rest of this paper, $i$ will be used to represent $i'$ as an element 
of $\alpha(\pi)$ unless it is unclear in the context as to whether the 
element $i$ is in $\alpha(\pi)$ or $\pi$.  

For more information on noncrossing partitions and their applications, see~\cite{Comtet}, 
\cite{Kreweras}, \cite{Nica-Speicher}, \cite{Simion}, \cite{Simion-Ullman}, \cite{Stanley-EC1},
\cite{Stanley-NC}.

%-------------------------------------------------------------------------------------------
\section{The toric contribution of the Adin h-vector}
\label{sec:Qpoly}

Let ${\mathcal P}$ be a $d$-dimensional cubical complex.  
In this section, the toric contribution of the normalized Adin $h$-vector will be determined
using the information from Section~\ref{sec:adinH}.  
Start with Definition~\ref{def-BCL} for the toric $f$ polynomial of $\mathcal{P}$.

\begin{equation}\label{h_vec_1}
f({\mathcal P},x) = \sum_{F \in {\mathcal P}} g(\partial F, x)(x-1)^{d-dim F}
\end{equation}
where $F$ is a face of ${\mathcal P}$.  Then $F$ is a $j$-cube for some $0 \le j \le d$ or $F=\emptyset$.  Hence,
$$ f({\mathcal P},x) = (x-1)^{d+1} + \sum_{j=0}^{d} f_j \cdot (x-1)^{d-j}g(L_j,x) $$
where $(x-1)^{d+1}$ is the contribution when $F=\emptyset$.  Applying Equation~(\ref{adinF}), 
\begin{equation*}
f({\mathcal P},x)=(x-1)^{d+1} + \sum_{j=0}^{d} 2^{d-j}\sum_{i=0}^{j} {d-i \choose d-j}[h_{i+1}+h_i]  (x-1)^{d-j}g(L_j,x). 
\end{equation*}
Since $h_0=1$, collecting the contribution of each $h_i$ yields
\begin{equation} \label{h_vec}
\begin{split}
f({\mathcal P},x)& =\left[(x-1)^{d+1} + \sum_{j=0}^{d} 2^{d-j}{d \choose j}(x-1)^{d-j}g(L_j,x) \right] +  h_{d+1}\cdot g(L_d,x) \\
& \quad\quad  +\sum_{i=1}^{d} h_i \sum_{j=i-1}^{d} 2^{d-j}\left[{d-i \choose d-j}+{d+1-i \choose d-j}\right](x-1)^{d-j}g(L_j,x). \\
\end{split}
\end{equation}
Thus, $(x-1)^{d+1}$ will be part of the toric contribution of $h_0$.

\begin{definition}
Suppose ${\mathcal P}$ is a $d$-dimensional cubical complex.  Let $Q_{d,k}(x)$ be the polynomial which gives the toric contribution of the normalized Adin $h$-vector to $f({\mathcal P},x)$, namely 
\begin{equation}\label{c_vec}
f({\mathcal P},x) = \sum_{k=0}^{d+1} h_k \cdot Q_{d,k}(x).
\end{equation}
\end{definition}

By Equation~(\ref{h_vec}), the polynomials $Q_{d,k}(x)$ have an explicit formula:
\begin{equation}
\label{form:Qpoly}
Q_{d,k}(x)=
\left\{
\begin{array}{ll}
(x-1)^{d+1} + \sum_{j=0}^{d} 2^{d-j}{d \choose j}(x-1)^{d-j}g(L_j,x) & \mbox{if $k=0$,}  \\
\sum_{j=k-1}^{d} 2^{d-j}\left[{d-k \choose d-j}+{d+1-k \choose d-j} \right](x-1)^{d-j}g(L_j,x) & \mbox{if $1 \le k \le d$,}  \\
g(L_d,x) & \mbox{if $k=d+1$.} \\
\end{array}
\right.
\end{equation}
Table~\ref{table:Qpoly} in Section~\ref{sec:cint} lists $Q_{d,k}(x)$ for
small $d$. 
For future reference, note that, by Pascal's identity,
\begin{equation}
\label{eq:pascal}
Q_{d,k}(x)=\sum_{j=k-1}^{d} 2^{d-j}\left[{d-k \choose d-j-1}+2{d-k
    \choose d-j} \right](x-1)^{d-j}g(L_j,x) \quad \mbox{for $1 \le k \le d$.}
\end{equation}

%-------------------------------------------------------------------------------------------
\section{Contribution of shelling components to the toric $h$-vector}
\label{sec:shell}

\subsection{Definition of shelling and shelling component types}

Let $\mathcal{P}$ be a $d$-dimensional pure cubical complex where $\{F_1,\ldots,F_m\}$ are its facets.  
A {\em shelling} of $\mathcal{P}$ is a particular way of ordering the facets  such that for every $1 \le k \le m$,
the intersection $F_k \cap (F_1 \cup \ldots \cup F_{k-1})$ is the union of $(d-1)$-faces homeomorphic to a ball or sphere.
See~\cite[Definition 8.1]{Ziegler} for a definition of a shelling of the boundary of a polytope.
The {\em cubical shelling component} $F_k$ has {\em type} $(i,j)$ if the above intersection is the union of 
$i$ antipodally unpaired $(d-1)$-faces and $j$ pairs of antipodal $(d-1)$-faces. 
This was stated by Chan~\cite{Chan} and Adin~\cite[Theorem 5(iii)]{Adin}.  
For a proof, see~\cite[Lemma 3.3]{Ehrenborg-Hetyei}.

\begin{lemma}[Ehrenborg-Hetyei]
\label{lem:shell}
The ordered pair $(i,j)$ is the type of a shelling component in a shelling of a cubical 
$d$-complex if and only if one of the following holds:
\begin{itemize}
\item[(i)] $i=0$ and $j=d$; or 
\item[(ii)] $0<i<d$ and $0\leq j\leq d-i$.
\end{itemize}
Furthermore, in case $(i)$, the shelling component is homeomorphic to a $(d-1)$-sphere;
and in case $(ii)$, the shelling component is homeomorphic to a $(d-1)$-ball.
\end{lemma}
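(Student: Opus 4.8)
The plan is to prove Lemma~\ref{lem:shell} as a statement purely about the combinatorial geometry of cubical shellings, reducing the question to the structure of the intersection $F_k\cap(F_1\cup\cdots\cup F_{k-1})$ inside the boundary of the cube $F_k\cong L_d$. By definition of a shelling this intersection is a subcomplex $\mathcal R$ of $\partial F_k$ which is a union of facets (the $(d-1)$-faces, i.e. the ``faces'' of the $d$-cube) that is homeomorphic to a ball or a sphere; recording how many of these $(d-1)$-faces come in antipodal pairs versus singly gives the type $(i,j)$, so $0\le i$, $0\le j$, and $i+2j\le 2d$ since $\partial F_k$ has exactly $2d$ facets. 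The task is to pin down exactly which pairs $(i,j)$ occur.

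First I would handle the sphere case: if $\mathcal R=\partial F_k$ then all $2d$ facets are used, they all come in $d$ antipodal pairs, so $i=0$ and $j=d$, and $\mathcal R\cong S^{d-1}$; conversely I would argue that a proper subcomplex of $\partial L_d$ consisting of a union of facets cannot be a sphere (it is a proper closed subset of $S^{d-1}$ that is a manifold with the same dimension, hence has nonempty boundary, hence is a ball), so the sphere case forces $(0,d)$. This gives case (i) and the homeomorphism claim there. Next, for the ball case, $\mathcal R$ is a proper subcomplex, so at least one facet $G$ of $\partial F_k$ is missing; then its antipode $G^*$ cannot be ``paired,'' and I want to show the presence/absence pattern of facets is quite restricted. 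The key observation is that opposite facets of a cube are disjoint, so a pair of antipodal facets present in $\mathcal R$ contributes two disjoint $(d-1)$-balls, while for $\mathcal R$ to stay connected and be a ball the remaining unpaired facets must glue these together; a careful count using the fact that $\mathcal R$ is a shellable $(d-1)$-ball made of cube-facets will give $i\ge 1$ and $i\le d$ — the upper bound $i+2j\le 2d$ is automatic, but the sharper bound $j\le d-i$ (equivalently $i+j\le d$) is the real content.

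The sharper inequality $i+j\le d$ together with $i\ge 1$ is where I expect the main obstacle. I would prove $i+j\le d$ by an explicit coordinate argument on the cube $F_k=[0,1]^d$: each facet is $\{x_\ell=0\}$ or $\{x_\ell=1\}$ for some coordinate $\ell\in[1,d]$, an antipodal pair is $\{x_\ell=0\}$ together with $\{x_\ell=1\}$, and I claim that in a ball-type intersection $\mathcal R$, for each coordinate $\ell$ at least one of the two facets $\{x_\ell=0\},\{x_\ell=1\}$ is absent from $\mathcal R$ unless ... — more precisely, I would show that the set of coordinates $\ell$ for which \emph{both} facets appear has size exactly $j$, the set for which \emph{exactly one} appears has size $i$, and the set for which \emph{neither} appears is nonempty in the ball case; since these three sets partition $[1,d]$ we get $i+j\le d$ and $i\ge 1$ is forced because a nonempty union of facets meeting the ``neither'' condition... actually $i\ge 1$ follows because if $i=0$ then $\mathcal R$ is a disjoint union of antipodal pairs, hence disconnected unless $j=d$ (the sphere case), contradicting $\mathcal R$ being a ball. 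For the converse direction of case (ii) — that every pair with $0<i<d$, $0\le j\le d-i$ is realized — I would exhibit an explicit cubical complex and a shelling order achieving it, e.g. by stacking cubes in a $d$-dimensional grid and choosing the order of the new cube's attachment so that it meets the previous ones along a prescribed subcomplex of $\partial L_d$; the standard reference here is the construction in~\cite{Chan} or~\cite{Ehrenborg-Hetyei}, and since the statement is attributed to Ehrenborg--Hetyei I would simply invoke~\cite[Lemma 3.3]{Ehrenborg-Hetyei} for full details rather than redo the realizability construction. Finally, the homeomorphism types in the last sentence drop out of the above: in case (i) we showed $\mathcal R=\partial F_k\cong S^{d-1}$, and in case (ii) $\mathcal R$ is by construction a shellable proper subcomplex of $\partial L_d$ built from facets, hence a shellable $(d-1)$-manifold with boundary, hence a $(d-1)$-ball.
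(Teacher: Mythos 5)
The paper itself does not prove this lemma; immediately after stating it, it directs the reader to \cite[Lemma~3.3]{Ehrenborg-Hetyei}, and you do the same for the realizability (``if'') direction, so in that respect nothing is missing. The sketched ``only if'' direction, however, has a genuine gap in the argument that a ball forces $i\ge 1$. You claim that for $i=0$ the union $\mathcal R$ of antipodal pairs is ``disconnected unless $j=d$,'' but this is true only for $j\le 1$. For $j\ge 2$ the facets from distinct antipodal pairs meet along $(d-2)$-faces; writing $J$ for the set of $j$ doubled coordinates one has $\mathcal R = \partial\bigl([0,1]^J\bigr)\times[0,1]^{[1,d]\setminus J} \cong S^{j-1}\times D^{d-j}$, which is connected once $j\ge 2$ and is nonetheless neither a $(d-1)$-ball nor a $(d-1)$-sphere for $0<j<d$. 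The correct obstruction is homological or homotopical, not connectedness. (Your side remark that the ``neither'' set is nonempty in the ball case is also false --- take $i=1$, $j=d-1$ --- though you never actually rely on it.)

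A second discrepancy: the coordinate partition gives only $i+j\le d$, hence $i\le d$, whereas the lemma as printed requires $0<i<d$, i.e.\ $i\le d-1$; your sketch never addresses this stronger bound. I in fact suspect the printed bound is erroneous: in Section~\ref{sec-toricC} the paper sums $\sum_{j=0}^{d-1}\sum_{i=1}^{d-j}c_{i,j}(\cdots)$, which for $j=0$ runs $i$ up to $d$, and a shelling of $\partial\bigl([0,1]^{d+1}\bigr)$ that starts with one facet and then walks around its adjacent facets produces intermediate components of type $(d,0)$. So you should either supply an argument for $i<d$ or explicitly flag the mismatch between the statement and its downstream use; silently deriving the weaker $i\le d$ is not the same as proving the lemma as stated. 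Finally, the closing Danaraj--Klee-style inference (shellable plus manifold-with-boundary implies ball) can only be invoked once $i\ge 1$ is already in hand, since for $i=0$, $j=1$ the complex $\mathcal R$ is not even connected and for $i=0$, $1<j<d$ it is a manifold with boundary that is not a ball; it cannot be the step that excludes the $i=0$ cases.
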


A direct result of this lemma is that the first shelling component has type $(0,0)$, and
for cubical spheres, the last shelling component has type $(0,d)$.

Let $c_{i,j}$ be the number of type $(i,j)$ shelling components of $\mathcal{P}$.  In 
particular, $c_{0,0}=1$.  The vector $(\ldots, c_{i,j}, \ldots)$ is
called the {\em $c$-vector} of the shelling.  The $c$-vector is not unique to the 
cubical complex; instead, it depends on which shelling is chosen. 
However, different shellings may have the same $c$-vector.

%-------------------------------------------------------------------------------------------
\subsection{The toric contribution of $c_{i,j}$}
\label{sec-toricC}

Consider a shelling $F_1, \ldots, F_m$ of $\mathcal{P}$, and   
let $(i,j)$ be the type of the $t^{th}$ facet $F_t$.
According to Adin, $\sum_{k} h_kx^k = \sum_{t} \Delta_{t}h(x)$,
where $h_k$ is the normalized cubical $h$-vector and
$\Delta_{t}h(x)$ is the contribution from $F_t$.  
Adin~\cite[Equation (33)]{Adin} defines this contribution as
$$\Delta_{t}h(x)=
\left\{
\begin{array}{ll}
(\frac{1}{2})^i x^{j+1} (1+x)^{i-1} & \mbox{if $i \ge 1$,} \\
1  & \mbox{if $i=0$ and $j=0$} \\
x^{d+1} & \mbox{if $i=0$ and $j=d$.} \\   
\end{array}
\right.
$$
When $i\ge 1$, Lemma~\ref{lem:shell} implies that $1\le i \le d-j$ and $0\le j \le d-1$.  Thus,  
\begin{equation*}
\begin{split}
\sum_{k} h_kx^k&=1+c_{0,d}\cdot x^{d+1}+\sum_{j=0}^{d-1}\sum_{i=1}^{d-j} c_{i,j}\left(\frac{1}{2}\right)^i x^{j+1} (1+x)^{i-1}\\
&=1+c_{0,d}\cdot x^{d+1}+\sum_{j=0}^{d-1}\sum_{i=1}^{d-j} c_{i,j}\left(\frac{1}{2}\right)^i x^{j+1} \sum_{m=0}^{i-1}{i-1 \choose m}x^m.\\
\end{split}
\end{equation*}
Set $k=m+j+1$, and reorder the summations to get
\begin{equation*}
\begin{split}
\sum_{k} h_kx^k&=1+c_{0,d}\cdot x^{d+1}+\sum_{k=1}^{d}\sum_{j=0}^{k-1}\sum_{i=k-j}^{d-j} c_{i,j}\left(\frac{1}{2}\right)^i {i-1 \choose k-j-1}x^k.\\
\end{split}
\end{equation*}
This is equivalent to 
\begin{equation}
\label{form-hc}
h_k=\sum_{j=0}^{k-1} \sum_{i=k-j}^{d-j} \left(\frac{1}{2}\right)^i {i-1 \choose k-1-j}c_{i,j} \mbox{ for $1\le k \le d$.}
\end{equation}
Apply Equations (\ref{c_vec}) and (\ref{form-hc}) to get the contribution of $c_{i,j}$ in terms of 
the polynomials $Q_{d,k}(x)$.
$$\sum_{k=1}^{d} h_k \cdot Q_{d,k}(x)=\sum_{j=0}^{d-1} \sum_{i=1}^{d-j} c_{i,j} \sum_{k=j+1}^{i+j} \left(\frac{1}{2}\right)^i {i-1 \choose k-1-j}Q_{d,k}(x)$$
Define $C_{d,i,j}(x)$ to be the toric contribution of all shelling components of type $(i,j)$.  Hence,
\begin{equation}
\label{equ-c}
C_{d,i,j}(x)=\sum_{k=j+1}^{i+j} \left(\frac{1}{2}\right)^i {i-1 \choose k-1-j}Q_{d,k}(x)  \quad\mbox{for $i>0$}.
\end{equation}
In particular, for $i=1$, we have
\begin{equation}
\label{eq:CQ}
C_{d,1,j}(x)=\frac{1}{2}Q_{d,j+1}(x).
\end{equation}
Additionally, $C_{d,0,0}(x)=1$ and $C_{d,0,d}(x)=c_{0,d}\cdot
x^{d+1}$. 

%-------------------------------------------------------------------------------------------
\section{A combinatorial interpretation}
\label{sec:cint}
\subsection{A weight statistic for noncrossing partitions}
\label{sec:NC-wt}

In a noncrossing partition, consider three special types of elements.
The first are {\em singleton} elements.  
In a nonsingleton block, call the largest element of the block the {\em last element}.  
Obviously, the number of last elements in a partition is exactly the number of its nonsingleton blocks.  
Lastly, suppose both $k$ and $k+1$ are in the same block of a partition
(where addition is performed modulo $d$); 
in this case, call the element $k$ an {\em antisingleton} element.  This naming convention 
follows from the self-duality of the lattice of $NC(d)$.  See Lemma~\ref{Lem-anti} 
and its proof for more details.

Let $\block(\pi)$ equal the number of nonsingleton blocks in the partition $\pi$ and
$\sing(\pi)$ equal the number of singleton elements in $\pi$.
Then the total number of components of the partition is $|\pi|=\block(\pi)+\sing(\pi)$.  
For example, if $\pi=(136)(2)(4)(5)$, then $\block(\pi)=1$, $\sing(\pi)=3$, and $|\pi|=4$.

In Simion and Ullman's~\cite{Simion-Ullman} proof of the 
self-duality of the lattice of $NC(d)$, the involution $\alpha$ was defined such that 
if $\pi \in NC(d)$ has $k$ blocks (either singleton or 
nonsingleton blocks) then $\alpha(\pi)$ has $d+1-k$ blocks.  
Lemma~\ref{Lem-sum} follows immediately.

\begin{lemma}
\label{Lem-sum}
Let $\pi \in NC(d)$ for some $d>0$.  Then $|\pi|+|\alpha(\pi)|=d+1$. 
\end{lemma}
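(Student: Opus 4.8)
The statement $|\pi| + |\alpha(\pi)| = d+1$ is meant to follow ``immediately'' from the block-counting property of the Simion--Ullman involution recalled just before the lemma: if $\pi \in NC(d)$ has $k$ blocks total, then $\alpha(\pi)$ has $d+1-k$ blocks total. So the plan is simply to unwind the definitions. First I would recall that for any partition $\sigma$ of a ground set, the notation $|\sigma|$ in this paper denotes the total number of components (blocks), counting both singleton and nonsingleton blocks; this is the identity $|\sigma| = \block(\sigma) + \sing(\sigma)$ stated in the text. Thus $|\pi|$ is exactly the number $k$ of blocks of $\pi$, and $|\alpha(\pi)|$ is exactly the number of blocks of $\alpha(\pi)$.

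Next I would invoke the quoted property of $\alpha$: since $\alpha(\pi)$ is a noncrossing partition of a $d$-element set $[1', d']$ with $d+1-k$ blocks, we get $|\alpha(\pi)| = d+1-k$. Adding the two counts gives $|\pi| + |\alpha(\pi)| = k + (d+1-k) = d+1$, as claimed. If the authors want the argument fully self-contained rather than citing Simion--Ullman, I would instead verify the block-count property directly from the geometric construction of $\alpha$ given in the Preliminaries: the $d$ new points $1', \dots, d'$ are placed one in each of the $d$ arcs between cyclically consecutive original points, and $\alpha(\pi)$ is the \emph{coarsest} noncrossing partition of the new points whose chords avoid all chords of $\pi$. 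The original chords of $\pi$ subdivide the disk into regions, and each ``gap'' between consecutive blocks of $\pi$ (read cyclically) corresponds to exactly one block of $\alpha(\pi)$; since $\pi$ has $k$ blocks arranged cyclically around the circle, counting the gaps appropriately yields $d+1-k$ blocks for $\alpha(\pi)$. This region-counting is essentially an Euler-characteristic / planarity argument for the arrangement of chords inside the disk.

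The only mild subtlety — and the step most worth stating carefully — is the bookkeeping in the ``$d+1$'' rather than ``$d$'': the asymmetry comes from the fact that the arc $(d-1,d)$ receives the label $1'$ while the arc $(d,1)$ receives the label $d'$, so that the ``outermost'' region adjacent to the arc $(d,1)$ is always its own singleton block in the standard conventions, producing the extra $+1$. I expect this to be the main (though still quite minor) obstacle: making sure the off-by-one in the labeling convention is accounted for, so that the block count of $\alpha(\pi)$ is genuinely $d+1-k$ and not $d-k$. Once that is pinned down, the lemma is immediate. Given that the authors explicitly say ``Lemma~\ref{Lem-sum} follows immediately,'' I would keep the write-up to a couple of sentences, citing the Simion--Ullman block-count property and performing the one-line addition.
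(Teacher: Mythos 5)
Your proof is correct and takes essentially the same route as the paper: the paper simply cites the Simion--Ullman property that $\alpha$ sends a $k$-block noncrossing partition of $[1,d]$ to a $(d+1-k)$-block one and notes that the lemma follows immediately. Your additional sketch of how to verify that block count directly from the circular construction is fine, but is not needed for (or given in) the paper's one-line argument.
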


\begin{lemma}
\label{Lem-anti}
Let $\pi \in NC(d)$.  Then $k\in [1,d-1]$ is an antisingleton element of $\pi$ if and 
only if $d-k$ is a singleton element of $\alpha(\pi)$. 
\end{lemma}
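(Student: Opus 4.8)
The plan is to unravel both sides of the claimed equivalence using the geometric description of $\alpha$ via the circular representation, where the subdivided points $1',\dots,d'$ increase counterclockwise and $(d-i)'$ sits on the arc $(i,i+1)$ of the original circle. First I would recall precisely what it means for $k\in[1,d-1]$ to be an antisingleton element of $\pi$: by definition $k$ and $k+1$ lie in the same block of $\pi$, which (for $k<d$, so that no wraparound modulo $d$ is involved) means the chord or polygon edge joining $k$ and $k+1$ is actually drawn in the circular picture of $\pi$. Since $(d-k)'$ is the subdivision point placed on the arc $(k,k+1)$, the presence of that chord in $\pi$ forces $(d-k)'$ to be ``blocked off'' from the rest of the circle: every chord of $\alpha(\pi)$ must avoid all chords of $\pi$, and a chord of $\alpha(\pi)$ leaving $(d-k)'$ would have to cross the drawn $(k,k+1)$-chord of $\pi$. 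Hence $(d-k)'$ can be joined to no other primed point, i.e.\ $d-k$ is a singleton of $\alpha(\pi)$.

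Conversely, I would argue that if $k$ is \emph{not} an antisingleton of $\pi$ (with $1\le k\le d-1$), then $d-k$ is not a singleton of $\alpha(\pi)$. If $k$ and $k+1$ are in different blocks of $\pi$, then no chord of $\pi$ separates the arc $(k,k+1)$ from the arc region immediately adjacent to it on at least one side; concretely, the point $(d-k)'$ can be connected to an adjacent primed point (the subdivision point of a neighboring original arc, or $1'$/$d'$ near the wraparound) without crossing any chord of $\pi$, because the coarsest such noncrossing partition $\alpha(\pi)$ always merges primed points that are not separated by $\pi$'s chords. Since $\alpha(\pi)$ is the \emph{coarsest} noncrossing partition of $[1',d']$ avoiding $\pi$'s chords, it will in fact place $(d-k)'$ in a nonsingleton block whenever room permits, so $d-k$ is not a singleton. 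Combining the two directions gives the stated ``if and only if.''

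The main obstacle — and the step deserving the most care — is the converse direction, specifically the claim that when $k,k+1$ are separated in $\pi$, the coarsest $\alpha(\pi)$ genuinely connects $(d-k)'$ to something. One must verify that the local picture near the arc $(k,k+1)$ always admits such a connection and that maximality/coarseness of $\alpha(\pi)$ realizes it; this is where the precise Simion--Ullman construction (and the boundary conventions at $1'$ and $d'$, which correspond to the arcs $(d-1,d)$ and $(d,1)$) has to be invoked rather than hand-waved. It may be cleanest to instead use the enumerative shadow of $\alpha$: by Lemma~\ref{Lem-sum} and the block-count relation $|\alpha(\pi)|=d+1-|\pi|$, one can count antisingletons of $\pi$ against singletons of $\alpha(\pi)$ and show the two quantities agree in aggregate, then upgrade to the pointwise statement using the explicit position $(d-k)'\leftrightarrow k$; but I expect the direct geometric argument above, carefully executed, to be the most transparent.
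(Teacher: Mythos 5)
Your proposal is correct and uses essentially the same argument as the paper: the forward direction via the chord $(k,k+1)$ isolating the subdivision point $(d-k)'$, and the converse ``proved similarly.'' The paper's proof is in fact terser than yours — it just states the forward direction and says the converse is analogous — so your extra care about the converse (and your remark that it requires invoking the Simion--Ullman construction precisely) supplies detail the original omits, but does not constitute a different approach.
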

\begin{proof}
Let $k$ be an antisingleton of $\pi$.  Then $k+1$ must be in the same 
block of $\pi$ as $k$, and in the circular representation of $\pi$, $k$ and 
$k+1$ are connected by a chord.  By definition, $\alpha(\pi)$ is the coarsest 
noncrossing partition on $[1,d]$ whose chords do not cross any chord of $\pi$.  
Since $d-k$ is the element of $\alpha(\pi)$ located between $k$ and 
$k+1$ of $\pi$, $d-k$ must be a singleton of $\alpha(\pi)$.

The converse is proved similarly.
\end{proof}

\begin{remark}
\label{anti-d}
Following the same method as in the proof of Lemma~\ref{Lem-anti}, 
one can prove that $d$ is a singleton element of $\alpha(\pi)$ if and only
if $d$ is an antisingleton element of $\pi$.
\end{remark}

Definition~\ref{def-wtS} below will define a weight function in terms of
a family $\mathcal{S}$ of $i$ pairwise disjoint subsets of $[1,d]$ such that each element in $\mathcal{S}$ is a set of consecutive integers of $[1,d]$
in circular order.  An element of $\mathcal{S}$ is denoted by
$[k,l]:=\{k,\ldots,l\}$ and is called an {\em interval} of  
$\mathcal{S}$.  In particular, if $k=l$, the interval consists of a
single element.  If $1 \le k < l \le d$, then $[k,l]$  
is a set of increasing consecutive integers.  Call
$[k,l]=\{k,\ldots,d,1,\ldots,l\}$ a {\em wrapped} interval when $k>l$.  
Notice the last element of a wrapped interval, namely $l$, will not be
the largest element of the interval. Let $[1,d]^*$ denote the special wrapped interval consisting of all of the elements $1$ through 
$d$; whereas, $[1,d]$ is the non-wrapped interval. 
This distinction will be needed to treat the case where $d$ is an
antisingleton; see the last sentences before Example~\ref{ex:ps} below.

List the elements of $\mathcal{S}$ in the order 
$\mathcal{S}=\{[k_1,l_1],\ldots,[k_i,l_i]\}$ where $1\le k_1\le
l_1<k_2\le l_2<\ldots<k_i\le d$ and either $k_i\le l_i\le d$ or $1\le
l_i<k_1$.   
If $1\le l_i<k_1$, then the last interval in $\mathcal{S}$ is wrapped.
Also, only the last interval may be wrapped. 
Define $j$ to be the number of elements of $[1,d]$ that are not
contained in any interval of $\mathcal{S}$. 

$\mathcal{S}$ may be represented visually using the same circular representation that was defined
in Section~\ref{sec:NC}.  The intervals of $\mathcal{S}$ correspond to sets of consecutive elements;
see Fig.~\ref{fig:circS01}.  Let $B$ be the union of the regions bounded by the arcs corresponding 
to the intervals of $\mathcal{S}$.  Call the remaining region $A$.
In other words, for each $[k,l] \in \mathcal{S}$, draw an arc such that the elements of $[k,l]$ are on one side 
of the arc and all other elements of $[1,d]$ are on the other side of the arc.

\begin{figure}[h]
\includegraphics{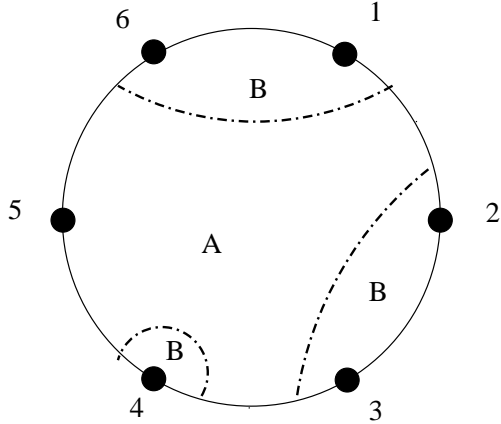}
\caption{The circular representation of $\mathcal{S}=\{[2,3],[4],[6,1]\}$ where $d=6$ and $[1,6]-\mathcal{S}=\{[5]\}$}
\label{fig:circS01}
\end{figure}

Let $[1,d]-\mathcal{S}$ be the set of intervals determined by the longest sequence of 
consecutive elements in $[1,d]$ located between the intervals of $\mathcal{S}$ in circular order.  
If $\mathcal{S}$ is defined as above, then 
$[1,d]-\mathcal{S}=\{[l_1+1,k_2-1],[l_2+1,k_3-1],\ldots,[l_i+1,k_1-1]\}$,
where $[l_n+1,k_{n+1}-1]$ exists for $1\le n <i$ if and only if $l_n<k_{n+1}-1$, and $[l_i+1,k_1-1]$
exists if and only if $l_i+1 \not \equiv k_1$ in circular order.  Also, if $l_i=d$, then $l_i+1\equiv1$, 
and if $k_1=1$, then $k_1-1\equiv d$.  If $d\le l_i<k_1-1$, then $[l_i+1,k_1-1]$ is the first interval of 
$[1,d]-\mathcal{S}$ in the same sense as in the ordering of the intervals of $\mathcal{S}$; 
otherwise, it is considered the last interval in the set $[1,d]-\mathcal{S}$.

Notice that $[1,d]-\mathcal{S}$ has the same number of intervals as $\mathcal{S}$ exactly when,
in circular order, each interval of $\mathcal{S}$ is directly followed by an element of $[1,d]$ that is not in 
any interval of $\mathcal{S}$; i.e., $l_n+1<k_{n+1}$ for $n\ge1$.  Otherwise, 
$[1,d]-\mathcal{S}$ will contain fewer intervals than $\mathcal{S}$.  See Fig.~\ref{fig:circS01}.
The definition of $j$ implies that $[1,d]-\mathcal{S}$ will always contain a total of 
$j$ elements from $[1,d]$ in its intervals.

For each $\mathcal{S}=\{[k_1,l_1],\ldots,[k_i,l_i]\}$, consisting of $i$
intervals and containing a total of $d-j$ elements, we will also
consider the related family of intervals
$\mathcal{S}'=\{[d-k_i+1,d-l_{i-1}],\ldots,[d-k_2+1,d-l_1],[d-k_1+1,d-l_i]\}$. 
Since $k_1\le l_1<k_2\le l_2<\ldots<k_i$, then $d-k_i+1\le
d-l_{i-1}<d-k_{i-1}+1\le d-l_{i-2}<\ldots<d-k_1+1$. The family
$\mathcal{S}'$ contains $i$ pairwise disjoint intervals and a total of
$i+j$ elements.  See below for a justification. 
In particular, if $\mathcal{S}=\emptyset$, then
$\mathcal{S}'=\{[1,d]^*\}$ and vice versa. 

$\mathcal{S}$ and $\mathcal{S}'$ can be represented visually on the same
circle.  Place $[1,d]$ and $[1',d']$ on a circle as in
Section~\ref{sec:NC}.  For each $[k_n,l_n]\in {\mathcal S}$, insert an
arc, whose endpoints are adjacent to $k_n$ and $l_n$, in the way
described above. Make sure the endpoints of the arc are near enough
to $k_n$ and 
$l_n$ such that the adjacency relation is not destroyed when the
elements of $[1',d']$ are marked.  Call the union of the regions determined by
these arcs $B$.  Let the remaining region be $A$. $\mathcal{S}'$ is the
set of intervals determined by the longest list of consecutive elements
of $[1',d']$ in region $A$,  
which do not skip over any region of $B$.  See Fig.~\ref{fig:circS02}.
Notice that in this visual representation, ${\mathcal S}'$ is
represented on the disjoint copy $[1',d']$ of $[1,d]$. 

\begin{figure}[h]
\input{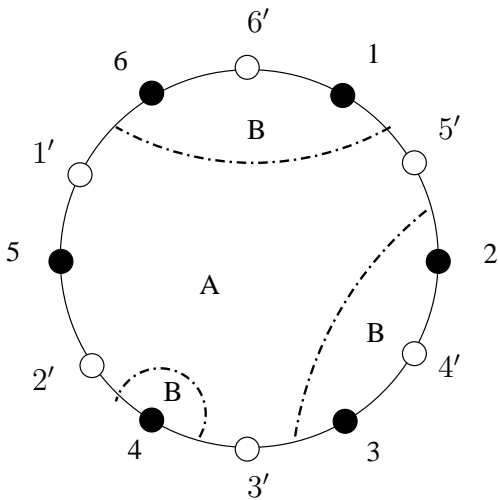}
\caption{The circular representation of $\mathcal{S}=\{[2,3],[4],[6,1]\}$ with $\mathcal{S}'=\{[1,2],[3],[5]\}$} 
\label{fig:circS02}
\end{figure}

Using this visual representation the family
${\mathcal S}$ of intervals on $[1,d]$ may be associated to the set
$\phi({\mathcal S})$ of all elements of $\{1,1',\ldots,d,d'\}$ that
belong to the union of regions $B$. The map 
${\mathcal S}\mapsto \phi({\mathcal S})$ is a bijection between the
families of pairwise disjoint intervals on $[1,d]$ and
those subsets of $\{1,1',\ldots,d,d'\}$  which consist of strings
of consecutive elements where the first and last element of each string
must be from $\{1,2,\ldots,d\}$. In particular, 
$\phi(\{[1,d]^*\})=\{1,1',\ldots, d,d'\}$ and
$\phi(\{[1,d]\})=\{1,1',\ldots, d,d'\}\setminus \{d'\}$. Define the
map $\phi'$ on the families of pairwise disjoint intervals on $[1',d']$
in a completely analogous way: these will be in bijection with all
subsets of $\{1,1',\ldots,d,d'\}$  which consist of
strings of consecutive elements where the first and last element of each
string must be from $\{1',2',\ldots,d'\}$. If ${\mathcal
  S}'$ is regarded as a family of intervals on $[1',d']$, then
$\phi'({\mathcal S}')$ is the complement $\{1,1',\ldots,d,d'\}\setminus
\phi({\mathcal S})$ of $\phi({\mathcal S})$. 

 Define $\beta$ to be the operation that takes a given $\mathcal{S}$ and
 transforms it into the associated $\mathcal{S}'$; i.e.,
 $\beta(\mathcal{S})=\mathcal{S}'$ (now consider them both as
 families of intervals on $[1,d]$).  Then $\beta$ is an involution since
 given $\mathcal{S}$ and $\mathcal{S}'$ as defined above
 $\beta(\mathcal{S}')=
 \{[d-(d-k_1+1)+1,d-(d-l_1)],\ldots,[d-(d-k_{i-1}+1)+1,d-(d-l_{i-1})],[d-(d-k_i+1)+1,d-(d-l_i)]\}=\mathcal{S}$. Using
 the visual representation described above, the same fact also follows
 from the observation that taking complements is an involution on the
 set of subsets of $\{1,1',\ldots,d,d'\}$.

Obviously, $\mathcal{S}'$ consists of $i$ intervals.  To see that $\mathcal{S}'$ contains a total of $i+j$
elements, consider the visual representation described above. The set
$\phi({\mathcal S})$ has $i$ maximal strings of consecutive elements,
each string contains one more element of $[1,d]$ than of
$[1',d']$. Since $\phi({\mathcal S})$ contains $d-j$ elements of
$[1,d]$, it contains $d-j-i$ elements of $[1',d']$. Therefore the complement
$\phi'({\mathcal S}')$ must contain $d-(d-i-j)=i+j$ elements of
$[1',d']$. 

\begin{lemma}
Let $\mathcal{S}$ be as defined above and $\mathcal{S} \neq \{[1,d]^*\}$.  
If $l_i\neq d$, then exactly one interval of $\mathcal{S}$ and $\mathcal{S}'$ is wrapped.
If $l_i=d$, then no interval of $\mathcal{S}$ or $\mathcal{S}'$ is wrapped.
\end{lemma}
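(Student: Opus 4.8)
The plan is to reduce the whole question to the last interval of each of the two ordered lists
$\mathcal{S}=\{[k_1,l_1],\ldots,[k_i,l_i]\}$ and
$\mathcal{S}'=\{[d-k_i+1,d-l_{i-1}],\ldots,[d-k_2+1,d-l_1],[d-k_1+1,d-l_i]\}$, using the interleaving inequalities that define these lists. (We may assume $i\ge 1$; the degenerate case $\mathcal{S}=\emptyset$, where $\mathcal{S}'=\{[1,d]^*\}$ contributes a single wrapped interval, agrees with the statement.) For $\mathcal{S}$ the chain $k_1\le l_1<k_2\le l_2<\cdots<k_i\le d$ forces $k_n\le l_n$ for every $n<i$, so each of the first $i-1$ intervals is non-wrapped, while the case distinction built into the definition of $\mathcal{S}$ shows that $[k_i,l_i]$ is wrapped (that is, $k_i>l_i$) if and only if $l_i<k_1$. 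Applying the substitution $x\mapsto d-x+1$ to the same chain gives $d-k_i+1\le d-l_{i-1}<d-k_{i-1}+1\le\cdots<d-l_1<d-k_1+1\le d$, so for $1\le n<i$ the interval $[d-k_{n+1}+1,\,d-l_n]$ has left endpoint not exceeding its right endpoint and is non-wrapped; hence the only interval of $\mathcal{S}'$ that can possibly be wrapped is the last one, $[d-k_1+1,\,d-l_i]$.

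Assume first $l_i\ne d$. Then $l_n<d$ for every $n$, so $d-l_i\in[1,d-1]$, both endpoints of $[d-k_1+1,\,d-l_i]$ lie in $[1,d]$, and this interval is wrapped exactly when $d-k_1+1>d-l_i$, i.e. exactly when $l_i\ge k_1$. Since ``$l_i<k_1$'' and ``$l_i\ge k_1$'' are complementary, precisely one of the two families has its last interval wrapped, and by the previous paragraph this is the only wrapped interval occurring among all intervals of $\mathcal{S}$ and $\mathcal{S}'$; this is the first assertion.

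It remains to treat $l_i=d$, where the single real subtlety appears. For $\mathcal{S}$ nothing is needed: $k_i\le d=l_i$, so $[k_i,l_i]$ is non-wrapped and no interval of $\mathcal{S}$ is wrapped. For $\mathcal{S}'$, the formal expression $[d-k_1+1,\,d-l_i]$ now reads $[d-k_1+1,\,0]$, whose right endpoint is not an element of $[1,d]$, so I would switch to the circular picture: since $l_i=d$ the point $d$ lies in $[k_i,d]\in\mathcal{S}$ and is the clockwise end of its run (its clockwise neighbour $d'$ is not in $\phi(\mathcal{S})$, because $\mathcal{S}$ has no wrapped interval), so the run of $\phi'(\mathcal{S}')=\{1,1',\ldots,d,d'\}\setminus\phi(\mathcal{S})$ that contains $d'$ has $d'$ as its counterclockwise end. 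Its primed labels therefore form a block $\{m,m+1,\ldots,d\}$, so this run is the non-wrapped interval $[m,d]$ of $[1',d']$ — the same phenomenon as in the model case $\phi(\{[1,d]\})=\{1,1',\ldots,d,d'\}\setminus\{d'\}$, whose complement is the singleton $\{d'\}=[d,d]$. Hence no interval of $\mathcal{S}'$ is wrapped, proving the second assertion. I expect this last step — correctly reading off the ``$0$'' endpoint and verifying no wrap hides there — to be the only genuine obstacle, the rest being straightforward bookkeeping with the defining inequalities.
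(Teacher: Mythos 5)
Your argument is correct and follows the paper's approach: both reduce everything to the last interval of each family and the observation that $[k_i,l_i]$ wraps iff $l_i<k_1$, which is the exact complement of the condition $d-k_1+1>d-l_i$ for the last interval of $\mathcal{S}'$ to wrap. In the $l_i=d$ case the paper simply writes the last interval of $\mathcal{S}'$ as $[d-k_1+1,d]$, tacitly reading $d-l_i=0$ as $d$; your explicit passage through the circular model and the map $\phi'$ is a somewhat more careful treatment of that same edge case, but it is not a different route.
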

\begin{proof}
By the numbering convention of the intervals of $\mathcal{S}$, only the last interval $[k_i,l_i]$ may be wrapped.
This is also true for $\mathcal{S}'$.
Let $l_i \neq d$.  Suppose $[k_i,l_i]$ is not wrapped. Then $1\le k_1<l_i<d$, which implies that
$1\le d-l_i<d-k_1+1\le d$.  This is equivalent to the last interval of $\mathcal{S}'$ $[d-k_1+1,d-l_i]$ being wrapped.
These implications are all reversible; hence, $[k_i,l_i]$ is not wrapped if and only if $[d-k_1+1,d-l_i]$ is wrapped.
The result that $[k_i,l_i]$ is wrapped if and only if $[d-k_1+1,d-l_i]$ is not wrapped is proved similarly.

If $l_i=d$, then the last interval of $\mathcal{S}$ is $[k_i,d]$, and  
the last interval of $\mathcal{S}'$ is $[d-k_1+1,d]$.  Neither of which is wrapped.
\end{proof}

If $\mathcal{S}=\{[1,d]^*\}$, then $\mathcal{S}'=\emptyset$.  Obviously, in this situation, 
exactly one of the intervals of $\mathcal{S}$ and $\mathcal{S}'$ is wrapped.

Given some partition of $[1,d]$ and an $\mathcal{S}$ as defined above,
we say that a singleton element or a last element is 
{\em in $\mathcal{S}$} if the element is contained in some interval of
$\mathcal{S}$. We also say that an antisingleton element $k$ is {\em in $\mathcal{S}$} when the pair $\{k,k+1\}$ is contained in a single interval of 
$\mathcal{S}$.  In the special case when $d$ is an antisingleton element
of the partition, we say it is {\em in $\mathcal{S}$} if and only if
the last interval of $\mathcal{S}$ is wrapped. Notice $d$ can be an
antisingleton in $[1,d]^*$ but not in $[1,d]$. 

\begin{example}
\label{ex:ps}
Let $\pi=(136)(2)(4)(5)$ and $\mathcal{S}=\{[2,3],[4],[6,1]\}$.  Then 
$\alpha(\pi)=(123)(45)(6)$ and $\mathcal{S}'=\{[1,2],[3],[5]\}$.  See Fig.~\ref{fig:circ04}.

\begin{figure}[h]
\input{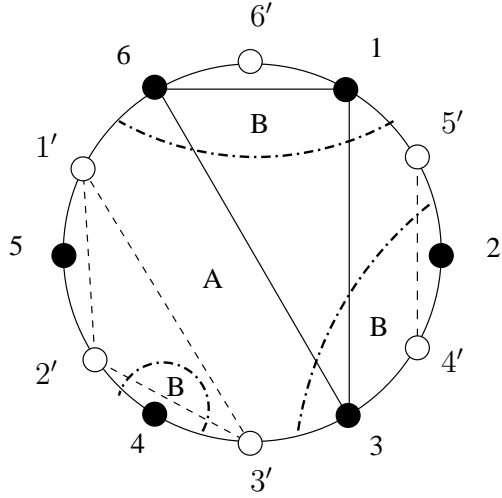}
\caption{The circular representation of $\pi=(136)(2)(4)(5)$ with $\mathcal{S}=\{[2,3],[4],[6,1]\}$ 
and $\alpha(\pi)=(123)(45)(6)$ with $\mathcal{S}'=\{[1,2],[3],[5]\}$}
\label{fig:circ04}
\end{figure}
\end{example}

\begin{definition}
\label{def-wtS}
Let $\pi \in NC(d)$ and $\mathcal{S}$ as defined above.  Define the weight function 
$wt_{\mathcal{S}}(\pi)$ as follows.  Singleton and antisingleton elements have a weight of $x$ if they 
are in $\mathcal{S}$ and a weight of $1$ otherwise.  Nonsingleton blocks have a weight of $x$.  
\end{definition}

If $\mathcal{S}$ consists of a single interval, namely
$\mathcal{S}=\{[k,d]\}$ for some $k$, use the following abbreviated 
notation of the weight function.

\begin{definition}
\label{def-wt}
Let $\pi \in NC(d)$.  If $1 \le k \le d$, define $wt_k(\pi)=wt_{\{[k,d]\}}(\pi)$.
Define $wt_0(\pi)=wt_{\{[1,d]^*\}}(\pi)$, and define $wt_{d+1}(\pi)=wt_\emptyset(\pi)$.
\end{definition}

A direct consequence of this definition is that $wt_{d+1}(\pi)=x^{\block(\pi)}$
and $wt_0(\pi)=x^{|\pi|+\sing(\alpha(\pi))}$.  By Lemma~\ref{Lem-sum}, 
the second weight function may be rewritten as $w_0(\pi)=x^{d+1-\block(\alpha(\pi))}$.  

Using~\cite[Lemma 5.4]{Hetyei-2nd}, given below, $\sum_{\pi \in NC(d)} w_{d+1}(\pi) = g(L_d, x)$.

\begin{lemma}[Hetyei]
\label{Thrm-wt}
$[x^k]g(L_d,x)$ is the number of noncrossing partitions on $[1,d]$ with 
exactly $k$ nonsingleton blocks.
\end{lemma}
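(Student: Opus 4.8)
The plan is to pass to generating functions. If $N_{d,k}$ denotes the number of noncrossing partitions of $[1,d]$ with exactly $k$ nonsingleton blocks, then $\sum_k N_{d,k}x^k=\sum_{\pi\in NC(d)}x^{\block(\pi)}$, so the lemma is equivalent to the identity $\sum_{\pi\in NC(d)}x^{\block(\pi)}=g(L_d,x)$ for every $d\ge 0$. I would therefore introduce $\mathcal N(t)=\sum_{d\ge0}\bigl(\sum_{\pi\in NC(d)}x^{\block(\pi)}\bigr)t^d$ and $\mathcal G(t)=\sum_{d\ge0}g(L_d,x)t^d$, and show that both satisfy the same quadratic equation
\[
P=1+\bigl(t+(x-1)t^2\bigr)P^2 ,
\]
which has a unique formal power series solution with constant term $1$; this forces $\mathcal N=\mathcal G$ and finishes the proof.

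For the $\mathcal G$ side I would use the formula $g(L_d,x)=\sum_{k}C_{d-k}\binom{d-k}{k}(x-1)^k$ recalled in Section~\ref{sec:toricH}, together with the Catalan generating function $C(s)=\sum_{n\ge0}C_ns^n$, which satisfies $C(s)=1+sC(s)^2$. Reading off the coefficient of $u^k$ in $C\bigl((1+u)t\bigr)=\sum_n C_n(1+u)^nt^n$ gives $[u^k]C\bigl((1+u)t\bigr)=\sum_{n\ge k}C_n\binom{n}{k}t^n$, and substituting $u=(x-1)t$ (legitimate since $(x-1)t$ has positive order in $t$) collapses $\sum_d t^d\sum_k C_{d-k}\binom{d-k}{k}(x-1)^k$ into $C\bigl(t+(x-1)t^2\bigr)$. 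Hence $\mathcal G(t)=C\bigl(t+(x-1)t^2\bigr)$, which yields the quadratic for $\mathcal G$ immediately.

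For the $\mathcal N$ side I would use the standard ``connected component'' decomposition of noncrossing partitions: cutting $\pi\in NC(d)$ immediately after the largest element of the block containing $1$ writes $\pi$ uniquely as a concatenation of noncrossing partitions on consecutive intervals, each of which has its smallest and largest points in one block. Writing $\mathcal M(t)=\sum_{r\ge1}M_r(x)t^r$ for the generating function of these ``connected'' partitions weighted by $x^{\block}$, this decomposition gives $\mathcal N=1/(1-\mathcal M)$. A connected partition of $[1,r]$ is either the one-point partition ($r=1$), or else the block $B$ containing $1$ and $r$ has some size $m\ge2$, contributing a factor $x$, while the $m-1$ gaps strictly between consecutive elements of $B$ each carry an arbitrary noncrossing partition; this gives $\mathcal M=t+x\sum_{m\ge2}t^m\mathcal N^{m-1}=t+xt^2\mathcal N/(1-t\mathcal N)$. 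Eliminating $\mathcal M$ between this and $\mathcal M=(\mathcal N-1)/\mathcal N$ and clearing denominators produces, after a short computation, $\mathcal N=1+\bigl(t+(x-1)t^2\bigr)\mathcal N^2$, the same equation as for $\mathcal G$.

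The algebra in both halves (collapsing the explicit formula into $C(t+(x-1)t^2)$, and the elimination step for $\mathcal N$) is mechanical, so I do not expect it to be the obstacle. The points that genuinely need care are the two combinatorial claims underlying the $\mathcal N$ decomposition: that no block of $\pi$ straddles the cut made just after the last element of the block of $1$, and that each gap of that block supports an independent noncrossing partition. Both follow quickly from the defining noncrossing condition (a straddling block would cross the block of $1$), and verifying them carefully is the crux. I would also remark that a purely bijective proof — matching pairs $(\pi,S)$, with $S$ a $k$-element set of nonsingleton blocks of $\pi$, against the $C_{d-k}\binom{d-k}{k}$ objects counted by the $x^k$-coefficient of the expanded formula — looks tempting, but the obvious candidate map ``delete the minimum of each block in $S$'' fails to be injective, so the generating-function route is the one I would pursue.
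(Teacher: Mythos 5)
Your proof is correct and self-contained; note that the paper itself does not prove this lemma but simply imports it as \cite[Lemma 5.4]{Hetyei-2nd}, so there is no internal argument to compare against. Your generating-function route is a natural and complete way to establish it: both halves check out. The identity $\mathcal G(t)=C\bigl(t+(x-1)t^2\bigr)$ follows exactly as you say from binomially expanding $C((1+u)t)$ and substituting $u=(x-1)t$, and composing with $C(s)=1+sC(s)^2$ gives $\mathcal G=1+\bigl(t+(x-1)t^2\bigr)\mathcal G^2$. For the combinatorial side, the irreducible (``connected'') decomposition at the largest element of the block of $1$ is valid because a block crossing the cut or joining two distinct gaps of $B$ would cross $B$ (using that $1$ and the endpoints of $B$ all lie in one block); the algebraic elimination of $\mathcal M$ from $\mathcal M=(\mathcal N-1)/\mathcal N$ and $\mathcal M=t+xt^2\mathcal N/(1-t\mathcal N)$ does produce $\mathcal N=1+\bigl(t+(x-1)t^2\bigr)\mathcal N^2$, and the quadratic determines a unique power series with constant term $1$ by the obvious coefficient recursion, so $\mathcal N=\mathcal G$. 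The one point worth stating slightly more carefully in a final write-up is that the weight $x^{\block}$ is genuinely multiplicative under concatenation and under placing arbitrary noncrossing partitions in the gaps of $B$ (each gap's nonsingleton blocks count independently, and $B$ itself contributes exactly one); you clearly have this in mind, but it is what justifies passing from the bijective decomposition to the stated generating-function identities.
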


Let $\sing_{\mathcal{S}}(\pi)$ denote the number of singleton elements of $\pi$ in $\mathcal{S}$.  
By Lemma~\ref{Lem-anti}, $\sing_{[1,d]-\mathcal{S}'}(\alpha(\pi))$ is the number
of antisingletons of $\pi$ in $\mathcal{S}$.  Thus, Definition~\ref{def-wtS} yields the 
explicit formula
\begin{equation}
\label{form-wtS}
wt_{\mathcal{S}}(\pi)=x^{\block(\pi)+\sing_{\mathcal{S}}(\pi)+\sing_{[1,d]-\mathcal{S}'}(\alpha(\pi))},
\end{equation}
and for $1 \le k \le d$, Definition~\ref{def-wt} gives the formula 
\begin{equation}
\label{form-wt}
wt_k(\pi)=x^{\block(\pi)+\sing_{\{[k,d]\}}(\pi)+\sing_{\{[1,d-k]\}}(\alpha(\pi))}. 
\end{equation}

%-------------------------------------------------------------------------------------------
\subsection{$C_{d,i,j}(x)$ as the total weight of objects}
\label{sec-Cweight}

\begin{lemma}
\label{lem-halfQ}
Let $\pi \in NC(d)$, and suppose $\mathcal{S}=\{[n,m]\}$ where $[n,m]$ is wrapped if $n>m$.
Let $d-j$ be the number of elements in $[n,m]$.  Hence, $0 \le j \le d-1$.  Then
\begin{equation}
C_{d,1,j}(x)=\sum_{\pi \in NC(d)} wt_{\mathcal{S}}(\pi)
\end{equation}
\end{lemma}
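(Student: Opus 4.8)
The plan is to prove the identity by evaluating the right-hand side sum $\sum_{\pi\in NC(d)} wt_{\mathcal{S}}(\pi)$ using the explicit formula \eqref{form-wtS} and comparing it with the explicit formula \eqref{eq:CQ}, which says $C_{d,1,j}(x)=\frac12 Q_{d,j+1}(x)$, together with the formula \eqref{form:Qpoly} for $Q_{d,k}(x)$ in the range $1\le k\le d$. So the whole burden is to show
\begin{equation}\label{eq:target}
\sum_{\pi\in NC(d)} wt_{\mathcal{S}}(\pi)
 = \frac12\sum_{m=j}^{d} 2^{d-m}\left[\binom{d-j-1}{d-m}+\binom{d-j}{d-m}\right](x-1)^{d-m}g(L_m,x),
\end{equation}
where $\mathcal{S}=\{[n,m]\}$ has $d-j$ elements (I am reindexing the summation variable in \eqref{form:Qpoly} from $j$ to $m$ to avoid a clash). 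By \eqref{form-wtS}, $wt_{\mathcal{S}}(\pi)=x^{\block(\pi)}\cdot x^{\sing_{\mathcal{S}}(\pi)}\cdot x^{\sing_{[1,d]-\mathcal{S}'}(\alpha(\pi))}$, and by Lemma~\ref{Lem-anti} the last exponent counts antisingletons of $\pi$ lying in $\mathcal{S}$. So the first step is to rewrite everything purely in terms of $\pi$: $wt_{\mathcal{S}}(\pi)=x^{\block(\pi)+a_{\mathcal{S}}(\pi)+s_{\mathcal{S}}(\pi)}$, where $s_{\mathcal{S}}(\pi)$ is the number of singletons of $\pi$ in the interval $[n,m]$ and $a_{\mathcal{S}}(\pi)$ is the number of antisingletons of $\pi$ in $[n,m]$ (i.e.\ indices $k$ with $k,k+1$ both in $[n,m]$ and both in one block of $\pi$, addition mod $d$, with the special convention for $d$ when the interval is wrapped).

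The key combinatorial step is a decomposition of $NC(d)$ according to the restriction of $\pi$ to the complementary interval $[1,d]\setminus[n,m]$, which has $j$ elements sitting consecutively in circular order. Since $\pi$ is noncrossing, its restriction to these $j$ consecutive positions is itself a noncrossing partition of a $j$-element set, and each of its blocks is either a block of $\pi$ or is a subset of a block of $\pi$ whose remaining elements lie in $[n,m]$. I would stratify by how the $j$ outside-elements are grouped and by which of them are "absorbed" into a block that reaches into $[n,m]$; each outside element that is a singleton of $\pi$ contributes weight $1$ (it is \emph{not} in $\mathcal{S}$), each nonsingleton block entirely outside contributes $x$ (a $\block$), and the interaction terms are controlled by a standard noncrossing-partition convolution. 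The cleanest route is probably to recognize the generating function $\sum_{\pi} x^{\block(\pi)}t^{s_{\mathcal{S}}(\pi)}u^{a_{\mathcal{S}}(\pi)}$ (or a suitable specialization) as a product/convolution of a $g(L_m,x)$-type factor coming from the $d-j$ inside positions and a $(x-1)$-type factor coming from the $j$ outside positions; the binomial sum $\sum_m 2^{d-m}\binom{\cdots}{\cdots}(x-1)^{d-m}g(L_m,x)$ then emerges from choosing which of the outside elements merge leftward versus rightward into $\mathcal{S}$, which is exactly what produces the two binomial coefficients $\binom{d-j-1}{d-m}$ and $\binom{d-j}{d-m}$ and the overall factor $\tfrac12$ (halving an over-count, or equivalently the $(1/2)^i$ with $i=1$ in \eqref{equ-c}).

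Concretely, the steps in order: (1) translate $wt_{\mathcal{S}}(\pi)$ to the intrinsic form via Lemma~\ref{Lem-anti} and Remark~\ref{anti-d}, carefully handling the wrapped-interval edge case for $d$; (2) set up the bijective stratification of $NC(d)$ by the trace of $\pi$ on the $j$ outside positions and a bookkeeping of how outside elements attach to inside blocks; (3) for a fixed configuration of inside-structure, sum over all ways the outside elements group and attach, getting a factor of the form $2^{\#}$ from two-sided attachment choices and binomial coefficients from choosing attachment counts, which after resumming over $m=d-(\text{number of inside elements used in blocks})$ reproduces the right-hand side of \eqref{eq:target}; (4) invoke Lemma~\ref{Thrm-wt} to identify the inside contribution with $[x^{\,\cdot}]g(L_m,x)$ and conclude. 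The main obstacle I anticipate is step (2)–(3): getting the attachment bookkeeping exactly right so that the two binomials $\binom{d-j-1}{d-m}$ and $\binom{d-j}{d-m}$ (note the asymmetry: $d-j-1$ versus $d-j$) and the $(x-1)^{d-m}$ fall out correctly, including the boundary behavior at the two ends of the interval $[n,m]$ — this asymmetry is presumably the combinatorial reflection of the "wrapped vs.\ non-wrapped" distinction and of whether $d$ is counted as an antisingleton, and matching it term-by-term with \eqref{form:Qpoly}/\eqref{eq:pascal} will require care rather than cleverness.
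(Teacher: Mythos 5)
Your plan misses the single idea that makes the paper's argument work, and the decomposition you propose is structurally mismatched to the target formula.

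First, the paper's proof uses a change of variables $x \mapsto x+1$: it passes to a modified weight $wt'_{\mathcal{S}}$ in which each special object (nonsingleton block, singleton or antisingleton in $\mathcal{S}$) carries weight $x+1$ rather than $x$, i.e.\ each such object may be independently \emph{marked} (weight $x$) or \emph{unmarked} (weight $1$). Proving $C_{d,1,j}(x+1)=\sum_{\pi}wt'_{\mathcal{S}}(\pi)$ is equivalent to the lemma, but it turns the alternating-sign factor $(x-1)^{d-m}$ in Equation~(\ref{form:Qpoly}) into a plain $x^{d-m}$, which \emph{can} be produced by a direct enumeration. Your proposal stays with the original $wt_{\mathcal{S}}$ and aims to produce $(x-1)^{d-m}$ factors ``from the $j$ outside positions'' by some convolution; you do not say how negative contributions would arise from a weighted count of noncrossing partitions, and I don't see how they can without an inclusion-exclusion step you have not introduced.

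Second, your stratification goes the wrong way. You propose to stratify $NC(d)$ by the restriction of $\pi$ to the $j$ \emph{outside} positions, so that the $g(L_m,x)$ factor comes ``from the $d-j$ inside positions.'' But in Equation~(\ref{form:Qpoly}) (evaluated at $k=j+1$), the index of $g(L_m,x)$ runs from $m=j$ to $m=d$; if $g(L_m,x)$ were produced by the inside positions alone, $m$ could not exceed $d-j$. In the paper's proof, after marking one removes the marked antisingletons and the marked block-ends whose block is fully marked and inside $\mathcal{S}$ (``type a'' and ``type b'' elements, all in $\mathcal{S}$); the residual partition $\pi_k$ lives on $k$ elements drawn from \emph{both} outside and unmarked inside positions, and Lemma~\ref{Thrm-wt} gives $g(L_k,x+1)$ for these residuals, with $k$ ranging from $j$ to $d$ as required. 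The reinsertion count $\binom{d-j-1}{d-k-1}2^{d-k-1}+\binom{d-j-1}{d-k}2^{d-k}$ then comes from choosing which inside positions hold type $a$/type $b$ elements and whether the endpoint $m$ of $[n,m]$ is among them (it can never be a marked antisingleton), and Pascal's identity (Equation~(\ref{eq:pascal})) closes the loop. The factor $\tfrac12$ is not ``halving an over-count''; it drops out algebraically from the $2^{d-k-1}$ versus the $2^{d-j}$-normalization in $Q_{d,j+1}$. You flagged steps (2)--(3) as the risky part of your plan; the mismatch in what $g(L_m,x)$ should count and the absence of the $x\mapsto x+1$ device are exactly where that risk materializes.
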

\begin{proof}
Definition~\ref{def-wtS} may be rephrased as follows.  Consider the special objects: 
singleton and antisingleton elements in $\mathcal{S}$ as well as all nonsingleton blocks in $\pi$.  
Assign the weight $x$ to each special object in $\pi$.  All others receive a weight of $1$.

Next, consider a related weight function $wt_{\mathcal{S}}'$, where any special object is assigned a 
weight of $x+1$.  All others have weight $1$.  This modified weight function 
$wt_{\mathcal{S}}'$ differs from $wt_{\mathcal{S}}$ in that there exists the option to  
choose whether or not to mark each special object.  
Each marked object has weight $x$, and each unmarked object has weight $1$. 
Then $wt_{\mathcal{S}}'(\pi)$ is computed by replacing every $x$ in
$wt_{\mathcal{S}}(\pi)$ with $x+1$.  It suffices to show the 
equivalent equality $C_{d,1,j}(x+1)=\sum_{\pi \in NC(d)}
wt_{\mathcal{S}}'(\pi)$. 

To compute $\sum_{\pi \in NC(d)} wt_{\mathcal{S}}'(\pi)$, choose $\pi$ in $NC(d)$, and mark as many
nonsingleton blocks, antisingleton elements in $\mathcal{S}$, and last elements where all other elements
in the block are marked and contained in one interval of $\mathcal{S}$ as desired.

Define {\em type a} elements to be the marked antisingletons of $\pi$.  Define {\em type b}
elements to be marked last elements of a block of $\pi$ whose other elements are all type $a$
and the entire block is completely contained in one interval of $\mathcal{S}$.  Marked singletons
are type $b$ elements.

Remove all type $a$ and type $b$ elements from $\pi$.  Let $k$ be the number of elements remaining in the partition.
Call the noncrossing partition formed by these $k$ elements $\pi_k$.  Any marked object left in 
$\pi_k$ must be a nonsingleton block.   By Lemma~\ref{Thrm-wt}, the weight of all possible $\pi_k$ in $NC(k)$ 
is $g(L_k,x+1)$.  Since type $a$ and type $b$ elements are only located in 
$\mathcal{S}$, there are at most $d-j$ of them.  Since $d-k$ is the number of removed 
type $a$ and type $b$ elements, $j \le k \le d$.

Next, reinsert $d-k$ type $a$ and type $b$ elements into $\pi_k$, and count the number of ways
this can be done.  If only the position and order of the marked type $a$ and type $b$ elements is known, 
the original $\pi$ can be recovered from $\pi_k$.   
Consider the linear representation of $\pi_k$.  Insert elements at each position 
where type $a$ and type $b$ elements are known to be located.  For the type $b$ 
elements, do nothing else.  The inserted antisingleton element joins the block to which the element directly 
following it belongs.  Thus, excluding the situation where a type $b$ element is directly preceded by a type 
$a$ element, all type $b$ elements are singletons.  The original partition $\pi$ is constructed from this newly 
created arc diagram.

For fixed $k$, one can determine where to insert the $d-k$ type $a$ and type $b$ elements.
Type $b$ elements may be inserted anywhere in $\mathcal{S}$.  
Type $a$, or antisingleton, elements may be inserted anywhere in $\mathcal{S}$ except at position $m$.  

Unless an inserted antisingleton element is placed immediately prior to a singleton element in $\pi_k$, 
the elements of $\pi_k$ will have the same role in $\pi$ as they did in $\pi_k$.  
If an antisingleton element is inserted before a singleton element in $\pi_k$, the singleton element
will change to a last element of a nonsingleton block.  Singleton elements cannot be marked in $\pi_k$, so the new 
nonsingleton block is still unmarked.  Thus, this change will not affect the overall weight. 

One may also insert an arbitrary number of consecutive type $a$ and type $b$ elements.   
Each inserted element may be either type $a$ or type $b$ so long as it is not located at position $m$.
Recall, only type $b$ elements may be placed at the last position of any interval in $\mathcal{S}$.

If $m$ is a type $b$ element, then the remaining $d-k-1$ type $a$ and type $b$ 
elements are placed in the other $d-j-1$ positions of $[n,m]$, and the total number of 
ways to arrange these elements is ${d-j-1 \choose d-k-1} 2^{d-k-1}$.  
If $m$ is not a type $b$ element, the last element of $[n,m]$ is an unmarked element.  
The element at position $m$ could be in a marked nonsingleton block, but it will never be a type $a$ element.
In this situation, all type $a$ and type $b$ elements are in the remaining $d-j-1$ positions of $\mathcal{S}$.  
The total number of ways to arrange these elements is ${d-j-1 \choose d-k} 2^{d-k}$.

Summing over all partitions of $NC(d)$ yields a total weight of
$$\sum_{k=j}^{d} \left[ {d-j-1 \choose d-k-1}2^{d-k-1} + {d-j-1 \choose d-k}2^{d-k} \right] x^{d-k} g(L_k,x+1),$$
where $x^{d-k}$ is the weight of the type $a$ and type $b$ elements, and $g(L_k,x+1)$ 
gives the weight of all $\pi_k$.  Thus,
$$\sum_{\pi \in NC(d)} wt_{\mathcal{S}}'(\pi)=\sum_{k=j}^{d} \left[ {d-j-1 \choose d-k-1} + {d-j-1 \choose d-k}2 \right]2^{d-k-1} x^{d-k} g(L_k,x+1).$$
By Equation (\ref{eq:pascal}) the last sum equals
$\frac{1}{2}Q_{d,j+1}(x+1)$, which, by Equation (\ref{eq:CQ}), is the
same as $C_{d,1,j}(x+1)$. 
\end{proof}

Note $k=d$ implies that $d-k=0$, yielding the special case where there are no type $a$ or type $b$ elements.  
At the other extreme, $k=j$ means that $d-k=d-j$, or every position in $\mathcal{S}$ is a type $a$ or type $b$ element.

The following identity will be used in Theorem~\ref{thrm-cwt}, the main result of this section.

\begin{lemma}
\label{lem-little}
Let $d>0$, $i>0$, $0 \le j \le d-i$, and $j \le k \le d$.  Then
$$\sum_{\ell=0}^{i}{i \choose \ell}{d-i-j \choose d-k-\ell}\cdot 2^{i-\ell}=\sum_{m=0}^{i-1}{i-1 \choose m}\left[2{d-m-j-1 \choose d-k}+{d-m-j-1 \choose d-k-1}\right].$$
\end{lemma}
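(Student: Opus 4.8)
The plan is to recognize both sides of the claimed identity as one and the same coefficient extracted from the product $(x+2)^i(1+x)^{d-i-j}$. The translation tool is the elementary fact that $\binom{a}{b}=[x^b](1+x)^a$; here the hypotheses $0\le j\le d-i$ and $0\le m\le i-1$ guarantee that every ``top'' index that occurs, namely $d-i-j$ and $d-m-j-1$, is nonnegative, so these are genuine polynomial identities and the convention $\binom{a}{b}=0$ for $b<0$ (which does occur, e.g. when $k=d$) causes no difficulty.

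For the left-hand side I would write $\binom{d-i-j}{d-k-\ell}=[x^{d-k-\ell}](1+x)^{d-i-j}=[x^{d-k}]\,x^{\ell}(1+x)^{d-i-j}$ and use the binomial theorem in the form $\sum_{\ell=0}^{i}\binom{i}{\ell}2^{i-\ell}x^{\ell}=(x+2)^{i}$ to collapse the sum, obtaining
\[
\sum_{\ell=0}^{i}\binom{i}{\ell}\binom{d-i-j}{d-k-\ell}2^{i-\ell}
=[x^{d-k}]\,(x+2)^{i}(1+x)^{d-i-j}.
\]

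For the right-hand side, note first that $2\binom{d-m-j-1}{d-k}+\binom{d-m-j-1}{d-k-1}=[x^{d-k}]\,(x+2)(1+x)^{d-m-j-1}$, so the right-hand side equals $[x^{d-k}]\,(x+2)\sum_{m=0}^{i-1}\binom{i-1}{m}(1+x)^{d-m-j-1}$. Since $m\le i-1$, each exponent $d-m-j-1$ is at least $d-i-j\ge 0$, so one may factor $(1+x)^{d-i-j}$ out of the sum and apply the binomial theorem once more: $\sum_{m=0}^{i-1}\binom{i-1}{m}(1+x)^{i-1-m}=(x+2)^{i-1}$. Hence the right-hand side is also $[x^{d-k}]\,(x+2)^{i}(1+x)^{d-i-j}$, and the two expressions coincide.

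There is no real obstacle here; the only points needing care are confirming that all binomial coefficients appearing have nonnegative upper index (so the generating-function translation is legitimate) and that the exponent $d-m-j-1$ never drops below $d-i-j$ when the outer sum is re-expanded — both guaranteed precisely by the stated ranges of $i,j,k,m$. If a bijective argument were preferred, one could instead interpret $[x^{d-k}](x+2)^{i}(1+x)^{d-i-j}$ as a weighted count of subsets of a $(d-j)$-element ground set split into an $i$-part and a $(d-i-j)$-part, but the generating-function computation above is the most economical route.
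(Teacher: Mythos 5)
Your proof is correct, but it takes a genuinely different route from the paper's. The paper proves the identity bijectively: it shows that both sides count pairs $(X,f)$ where $X\subseteq\{1,\ldots,d-j\}$ has size $d-k$ and $f$ is a $2$-coloring of $\{1,\ldots,i\}\setminus X$. On the left, one conditions on $\ell=|X\cap\{1,\ldots,i\}|$; on the right, on $m=|f^{-1}(1)\cap\{1,\ldots,i-1\}|$, splitting according to whether $i\in X$. You instead recognize both sides as $[x^{d-k}]\,(x+2)^{i}(1+x)^{d-i-j}$ via two applications of the binomial theorem. The generating-function route is more mechanical and shorter; the paper's bijective route makes the combinatorial content of the identity explicit (which fits the flavor of the surrounding paper and the fact that the $2$-colorings correspond to the type~$a$/$b$ distinction in Theorem~\ref{thrm-cwt}). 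You even gesture at the paper's interpretation in your final sentence — the weighted count of subsets of a $(d-j)$-set split into an $i$-part and a $(d-i-j)$-part is exactly the $(X,f)$ model — so the two arguments are really the same idea in two languages. One small remark: your caution about $d-m-j-1\ge d-i-j\ge 0$ is fine but not essential to the factorization step, since $\sum_{m=0}^{i-1}\binom{i-1}{m}(1+x)^{i-1-m}=(x+2)^{i-1}$ is a polynomial identity that holds before any coefficient extraction; the nonnegativity is only needed to keep all the $\binom{a}{b}$ you invoke honest as binomial coefficients of polynomials $(1+x)^a$.
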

\begin{proof}
The following proof will show that both sides of the given equation count all pairs $(X,f)$ such that $X$ is a subset 
of $\{1,\ldots,d-j\}$ and $f:\{1,\ldots,i\}\backslash X \rightarrow \{1,2\}$ is a 2-coloring of $\{1,\ldots,i\}\backslash X$.

On the left hand side, fix the size $\ell$ of $X \cap \{1,\ldots,i\}$.  The binomial coefficients count the number of ways
to select $X \cap \{1,\ldots,i\}$ and $X \cap \{i+1,\ldots,d-j\}$, respectively.  Finally, $2^{i-\ell}$ is the number of ways
to select $f$.

On the right hand side, set $m$ as the size of the set $Y:=f^{-1}(1) \cap \{1,\ldots,i-1\}$.  In other words, $Y$ is the 
set of elements of color 1 that are different from $i$.  There are ${i-1 \choose m}$ ways to select $Y$.  The elements of
$\{1,\ldots,i-1\}\backslash Y$ either belong to $X$ or have color 2.  If $i$ does not belong to $X$, then there are two 
ways to select the color of $i$, and $X$ is a subset of $(\{1,\ldots,i-1\}\backslash Y)\uplus \{i+1,\ldots,d-j\}$, which
may be selected ${d-m-j-1 \choose d-k}$ ways.  The elements of the remaining set 
$\{1,\ldots,d-j\}\backslash (X \uplus Y \uplus \{i\})$ must have color 2.  A similar reasoning for the case when $i$ belongs
to $X$ shows that the elements of $X \backslash \{i\}$ may be selected in ${d-m-j-1 \choose d-k-1}$ ways, completing
the proof that the right hand side counts the same set of objects as the left hand side.
\end{proof}

\begin{theorem}
\label{thrm-cwt}
Let $\mathcal{S}$ be a nonempty family of $i>1$ pairwise disjoint intervals on
$[1,d]$ and let $j$ be the number of elements in $[1,d]$ that are not in
any interval of $\mathcal{S}$. Then
$$C_{d,i,j}(x)=\sum_{\pi \in NC(d)}wt_{\mathcal{S}}(\pi).$$
\end{theorem}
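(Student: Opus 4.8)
The plan is to reduce the general case $i>1$ to the already-established case $i=1$ from Lemma~\ref{lem-halfQ}, by combining two independent ingredients: the formula~(\ref{equ-c}) expressing $C_{d,i,j}(x)$ as a binomial combination of the polynomials $Q_{d,k}(x)$, and the observation that $Q_{d,k}(x)=2C_{d,1,k-1}(x)$ is, by Lemma~\ref{lem-halfQ}, the total $wt_{\{[n,m]\}}$-weight over $NC(d)$ where $[n,m]$ is any single interval containing $d-(k-1)$ elements. So the first step is to write
\begin{equation*}
C_{d,i,j}(x)=\sum_{k=j+1}^{i+j}\left(\tfrac{1}{2}\right)^{i}\binom{i-1}{k-1-j}Q_{d,k}(x)
=\sum_{k=j+1}^{i+j}\left(\tfrac{1}{2}\right)^{i-1}\binom{i-1}{k-1-j}C_{d,1,k-1}(x),
\end{equation*}
and then interpret $C_{d,1,k-1}(x)$ itself as a weighted sum over noncrossing partitions against a single interval of the appropriate size.

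The heart of the argument will be a direct weighted-enumeration computation of $\sum_{\pi\in NC(d)}wt_{\mathcal{S}}(\pi)$ that mirrors the proof of Lemma~\ref{lem-halfQ}, but now with $i$ intervals instead of one. I would pass to the auxiliary weight $wt_{\mathcal{S}}'(\pi)$ (replacing each $x$ by $x+1$), so that it suffices to prove $C_{d,i,j}(x+1)=\sum_{\pi\in NC(d)}wt_{\mathcal{S}}'(\pi)$. As before, each special object (nonsingleton block, singleton in $\mathcal{S}$, antisingleton in $\mathcal{S}$) may be independently marked or not; I would define type $a$ and type $b$ elements exactly as in Lemma~\ref{lem-halfQ}, delete them to obtain a core partition $\pi_k\in NC(k)$ on the $k$ surviving elements whose remaining marked objects are all nonsingleton blocks, so that the core contributes $g(L_k,x+1)$ by Lemma~\ref{Thrm-wt}. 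The new feature is purely combinatorial bookkeeping: the $d-k$ deleted type $a$/type $b$ elements are now distributed among $i$ intervals of total size $d-j$, and within each interval the last position cannot host a type $a$ element (an antisingleton cannot be the last element of an interval and remain inside it), while every other position in $\mathcal{S}$ can be either type $a$ or type $b$. Tracking, for each interval, whether its last slot is occupied by a type $b$ element, one is led to a sum over subsets recording which interval-endpoints are ``used,'' and the count of insertions becomes precisely a sum of products of binomial coefficients with powers of $2$.

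The main obstacle — and the reason Lemma~\ref{lem-little} was proved just beforehand — is matching this insertion count against the binomial expansion of $\sum_{k=j+1}^{i+j}(1/2)^{i-1}\binom{i-1}{k-1-j}C_{d,1,k-1}(x+1)$ after substituting the single-interval formula from Lemma~\ref{lem-halfQ} for each $C_{d,1,k-1}(x+1)$. Concretely, expanding each $C_{d,1,k-1}(x+1)$ via the displayed sum in the proof of Lemma~\ref{lem-halfQ} and interchanging summations produces the left-hand side of Lemma~\ref{lem-little} (the $\sum_{\ell}\binom{i}{\ell}\binom{d-i-j}{d-k-\ell}2^{i-\ell}$ shape, coming from choosing how the $\ell$ deleted elements land on the $i$ ``endpoint'' slots versus the $d-i-j$ interior slots), whereas the direct enumeration of $\sum_\pi wt_{\mathcal{S}}'(\pi)$ naturally produces the right-hand side (the $\sum_m \binom{i-1}{m}[\,2\binom{d-m-j-1}{d-k}+\binom{d-m-j-1}{d-k-1}\,]$ shape, coming from the color-of-$i$ analysis). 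Thus once the two enumerations are set up so that their $g(L_k,x+1)$-coefficients are respectively the two sides of Lemma~\ref{lem-little}, the theorem follows termwise. I expect the delicate points to be (a) verifying that only the last slot of each interval is forbidden for type $a$ elements and that marking a type $b$ last element of an interval is what the factor ``$2\binom{\cdots}{d-k}+\binom{\cdots}{d-k-1}$'' is really counting, and (b) checking that the edge conventions for wrapped intervals and for $d$ as an antisingleton (spelled out before Example~\ref{ex:ps}) are consistent with the single-interval case so that the reduction through~(\ref{equ-c}) is legitimate; both are handled by the same local reasoning already used in Lemma~\ref{lem-halfQ}.
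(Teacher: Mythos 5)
Your proposal follows essentially the same route as the paper's proof: pass to the $(x+1)$-weight $wt_{\mathcal{S}}'$, define type $a$/$b$ elements and extract the core $\pi_k$ contributing $g(L_k,x+1)$, count reinsertions interval by interval, and bridge to $C_{d,i,j}(x+1)$ via Lemma~\ref{lem-little} together with Equation~(\ref{equ-c}). One small labeling slip: the direct insertion count (choosing which of the $i$ interval-endpoints receive type~$b$ elements) produces the \emph{left}-hand side of Lemma~\ref{lem-little}, while substituting the single-interval formula from Lemma~\ref{lem-halfQ} into Equation~(\ref{equ-c}) and interchanging sums produces the \emph{right}-hand side — you have the two attributions swapped, but this does not affect the correctness of the argument.
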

\begin{proof}
Let $\mathcal{S}=\{S_1,\ldots,S_i\}$ where $S_n=[k_n,l_n]$ for $1 \le n \le i$.  Recall, if $[k_i,l_i]$ is  
wrapped, then $d$ may be an antisingleton element.
If $i=1$, then the result follows from Lemma~\ref{lem-halfQ}.  For the rest of the proof, suppose $i>1$.

Consider the weight function $wt_{\mathcal{S}}'$ defined as in the proof of Lemma~\ref{lem-halfQ}, where
it is optional to mark the special objects for some $\pi \in NC(d)$. 
It will be shown that $C_{d,i,j}(x+1)=\sum_{\pi \in NC(d)} wt_{\mathcal{S}}'(\pi)$.

To compute $\sum_{\pi \in NC(d)} wt_{\mathcal{S}}'(\pi)$, choose $\pi$ in $NC(d)$, and mark as many
nonsingleton blocks, antisingleton elements in $\mathcal{S}$, and last elements where all other elements
of the block are marked and contained in a single interval of $\mathcal{S}$ as desired.
Recall that for each interval $S_n$, $l_n$ cannot be a marked antisingleton element.
Define {\em type a} elements and {\em type b} elements as before.

Remove all type $a$ and type $b$ elements, and let $k$ be the number of elements remaining in the partition.
As before, call the noncrossing partition formed by these $k$ elements $\pi_k$.  
Recall, the weight of all possible $\pi_k$ in $NC(k)$ is $g(L_k,x+1)$, and $j \le k \le d$.  

Next, count the number of ways the $d-k$ type $a$ and type $b$ elements may be reinserted into $\pi_k$ 
at positions in $\mathcal{S}$.  Type $a$ elements cannot be inserted 
at the end of any interval in $\mathcal{S}$.  Thus, type $a$ elements can be inserted at $d-j-i$ possible locations.  
Type $b$ elements may be inserted at any position in $\mathcal{S}$,
including the last position of any interval in $S$.  Let $\ell$ be the number of type $b$ 
elements inserted at the end of some interval in $\mathcal{S}$.  Then there are ${i \choose \ell}$ ways to select
these intervals.  The remaining $d-k-\ell$ type $a$ and $b$ elements are inserted at the positions in 
$\mathcal{S}$ that are not at the end of any interval.  Hence, $\max(0,i+j-k) \le \ell \le \min(i,d-k)$.

The total possible ways to reinsert these elements is 
$$\sum_{\ell=\max(0,i+j-k)}^{\min(i,d-k)}{i \choose \ell}{d-j-i \choose d-k-\ell} 2^{d-k-\ell}=\sum_{\ell=0}^{i} {i \choose \ell}{d-j-i \choose d-k-\ell} 2^{d-k-\ell}.$$

Summing over all partitions $\pi$ of $NC(d)$ and calculating the weight of the partitions yields
\begin{equation*}
\begin{split}
\sum_{\pi \in NC(d)} wt_{\mathcal{S}}'(\pi)&=\sum_{k=j}^{d} \sum_{\ell=0}^{i} {i \choose \ell}{d-j-i \choose d-k-\ell} 2^{d-k-\ell} x^{d-k}g(L_k,x+1)  \\
&=\sum_{k=j}^{d} \sum_{\ell=0}^{i} {i \choose \ell}{d-j-i \choose d-k-\ell} 2^{i-\ell} 2^{d-k-i} x^{d-k}g(L_k,x+1).  \\
\end{split}
\end{equation*}
Apply Lemma~\ref{lem-little} to get
$$\sum_{\pi \in NC(d)} wt_{\mathcal{S}}'(\pi)=\sum_{k=j}^{d} \sum_{m=0}^{i-1} {i-1 \choose m}\left[2{d-m-j-1 \choose d-k}+{d-m-j-1 \choose d-k-1}\right]2^{d-k-i} x^{d-k}g(L_k,x+1).$$
Note, $2{d-m-j-1 \choose d-k}{d-m-j-1 \choose d-k-1}=0$ when $m>k-j$.  
Exchanging the order of summation and applying Equation~(\ref{eq:pascal})
yields
$$\sum_{\pi \in NC(d)} wt_{\mathcal{S}}'(\pi)=\sum_{m=0}^{i-1} \left(\frac{1}{2}\right)^i {i-1 \choose m}Q_{d,m+j+1}(x+1).$$
The right hand side equals $C_{d,i,j}(x+1)$ by Equation(\ref{equ-c}).
\end{proof}

%-------------------------------------------------------------------------------------------
\subsection{$Q_{d,k}(x)$ as the total weight of objects}
\label{sec:weight}

\begin{theorem}
\label{Thrm-double}
For $0 \le k \le d+1$, 
$$Q_{d,k}(x)=
\left\{
\begin{array}{ll}
2 \sum_{\pi \in NC(d)} wt_k(\pi) & \mbox{if $1 \le k \le d$,} \\
\sum_{\pi \in NC(d)} wt_k(\pi) & \mbox{if $k=0$ or $d+1$.} \\   
\end{array}
\right.
$$
Here $wt_k$ is the weight function given in Definition~\ref{def-wt}. 
\end{theorem}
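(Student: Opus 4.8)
The plan is to reduce each case of Theorem~\ref{Thrm-double} to results already established. The cases $k=0$ and $k=d+1$ are essentially direct: for $k=d+1$ we noted that $wt_{d+1}(\pi)=x^{\block(\pi)}$, so $\sum_{\pi\in NC(d)}wt_{d+1}(\pi)=g(L_d,x)=Q_{d,d+1}(x)$ by Lemma~\ref{Thrm-wt} and Equation~(\ref{form:Qpoly}). For $k=0$ I would use the involution $\alpha$ together with the identity $wt_0(\pi)=x^{d+1-\block(\alpha(\pi))}$ (from Definition~\ref{def-wt} and Lemma~\ref{Lem-sum}); summing over $\pi\in NC(d)$ and reindexing by $\sigma=\alpha(\pi)$ gives $\sum_{\sigma\in NC(d)}x^{d+1-\block(\sigma)}$, which is $x^{d+1}g(L_d,1/x)$. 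One then has to check this coincides with the $k=0$ case of Equation~(\ref{form:Qpoly}); I expect this to follow from the Dehn--Sommerville symmetry of the toric $h$-vector (equivalently, the palindromicity relation $g(L_d,x)$ induces on the full $f$ polynomial), but pinning down this identification cleanly is one place care is needed.

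The substantive case is $1\le k\le d$, and here the strategy is to realize the factor of $2$ as the choice between two families of intervals. For fixed $k$ with $1\le k\le d$, the single-interval families $\mathcal{S}$ with $d-j$ elements split according to whether the interval is wrapped or not; more precisely, $wt_k(\pi)=wt_{\{[k,d]\}}(\pi)$, and the key observation is that the shelling contribution $C_{d,1,j}(x)=\tfrac12 Q_{d,j+1}(x)$ from Equation~(\ref{eq:CQ}) was shown in Lemma~\ref{lem-halfQ} to equal $\sum_{\pi\in NC(d)}wt_{\mathcal{S}}(\pi)$ for \emph{any} single interval $\mathcal{S}=\{[n,m]\}$ with $d-j$ elements, wrapped or not. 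Taking $j=k-1$ so that $\mathcal{S}$ has $d-k+1$ elements, Lemma~\ref{lem-halfQ} gives $\sum_{\pi\in NC(d)}wt_{\{[k,d]\}}(\pi)=C_{d,1,k-1}(x)=\tfrac12 Q_{d,k}(x)$. Multiplying by $2$ yields exactly $Q_{d,k}(x)=2\sum_{\pi\in NC(d)}wt_k(\pi)$.

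So the proof is short: invoke Lemma~\ref{Thrm-wt} for $k=d+1$, invoke Lemma~\ref{lem-halfQ} with $j=k-1$ and the identification $wt_k(\pi)=wt_{\{[k,d]\}}(\pi)$ for $1\le k\le d$, and handle $k=0$ via the Simion--Ullman involution $\alpha$ and Lemma~\ref{Lem-sum}. The main obstacle is the $k=0$ case: one must verify that $\sum_{\pi\in NC(d)}x^{d+1-\block(\alpha(\pi))}=\sum_{\sigma\in NC(d)}x^{d+1-\block(\sigma)}$ equals the bracketed polynomial $(x-1)^{d+1}+\sum_{j=0}^{d}2^{d-j}\binom{d}{j}(x-1)^{d-j}g(L_j,x)$ from Equation~(\ref{form:Qpoly}). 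I would do this either by the generalized Dehn--Sommerville relation for the toric $h$-vector (which forces $Q_{d,0}(x)$ to be the reverse of $Q_{d,d+1}(x)=g(L_d,x)$ up to the degree shift, matching $x^{d+1}g(L_d,1/x)$) or, failing a slick argument, by a direct generating-function check using the formula $g(L_j,x)=\sum_k C_{j-k}\binom{j-k}{k}(x-1)^k$ quoted in the Preliminaries. All the genuine combinatorial work — the insertion/removal bijection for type $a$ and type $b$ elements — has already been carried out in Lemma~\ref{lem-halfQ}, so no new enumeration is required here.
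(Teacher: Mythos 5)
Your treatment of the cases $k=d+1$ and $1\le k\le d$ reproduces the paper's argument exactly: $k=d+1$ is Lemma~\ref{Thrm-wt}, and $1\le k\le d$ is Lemma~\ref{lem-halfQ} (with $j=k-1$, $\mathcal{S}=\{[k,d]\}$) together with Equation~(\ref{eq:CQ}). Where you diverge is $k=0$. The paper does not use the involution there; it runs the full type-$a$/type-$b$ removal-and-reinsertion machinery from Lemma~\ref{lem-halfQ} once more with $\mathcal{S}=\{[1,d]^*\}$, handling the boundary cases $\ell=0$ and $\ell=1$ (where the remaining partition has at most one unmarked element) separately, and arrives at $Q_{d,0}(x+1)$ by a direct count. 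You instead observe $wt_0(\pi)=x^{d+1-\block(\alpha(\pi))}$, reindex by the bijection $\alpha$, and reduce to the identity $x^{d+1}g(L_d,1/x)=Q_{d,0}(x)$. That identity is genuine — it is precisely Stanley's Proposition 2.6, which the paper itself quotes a section later to prove Lemma~\ref{Lem-end} — so your route is valid and noticeably shorter; it trades the combinatorial enumeration for an external formula, and makes the symmetry between $Q_{d,0}$ and $Q_{d,d+1}$ transparent rather than emergent.

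One small imprecision worth flagging: the ``slick'' justification you propose for $x^{d+1}g(L_d,1/x)=Q_{d,0}(x)$ by invoking the generalized Dehn--Sommerville symmetry of the toric $h$-vector is not by itself enough. Dehn--Sommerville applied to cubical spheres yields only the palindromicity of the \emph{sum} $Q_{d,k}(x)+Q_{d,d+1-k}(x)$ (this is Equation~(\ref{eq:wdual}) in the paper's remark), not of each $Q_{d,k}$ individually. What you actually need is the sharper statement that the toric $h$-polynomial of the solid $d$-cube equals $g(L_d,x)$, i.e.\ the polytope identity $h(P,x)=g(\partial P,x)$ specialized to $P=L_d$; that is exactly what Stanley verifies by direct computation in Proposition~2.6. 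Your second fallback — a direct generating-function check — is therefore the honest completion of the $k=0$ case, and it is in effect what the paper's cited source does.
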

\begin{proof}  
%---Q_{d,i}---------------
{\noindent\it Case 1:}
Suppose $1 \le k \le d$. This case is a direct consequence of 
Lemma~\ref{lem-halfQ} and Equation (\ref{eq:CQ}). 

%---Q_{d,d+1}--------------
{\noindent\it Case 2:}
Let $k=d+1$.  By Lemma~\ref{Thrm-wt}, 
$Q_{d,d+1}(x)=g(L_d,x)=\sum_{\pi \in NC(d)}x^{\block(\pi)}$, which gives the desired result.

%---Q_{d,0}------------
{\noindent\it Case 3:}
Suppose $k=0$.
Then $\mathcal{S}=\{[1,d]^*\}$.  
Let $wt_0'(\pi)$ be the weight of $\pi$ in $NC(d)$ where marked special objects have a 
weight of $x$ and all others a weight of $1$ as defined in the proof of Lemma~\ref{lem-halfQ}.
This case be will proved by showing that $Q_{d,0}(x+1)=\sum_{\pi \in NC(d)} wt_0'(\pi).$

To compute $\sum_{\pi \in NC(d)}wt_0'(\pi)$, choose $\pi$ in $NC(d)$, and mark any nonsingleton block, 
antisingleton of $\pi$, and last element of $\pi$ where all other elements of the block have been marked.
Since $\mathcal{S}=\{[1,d]^*\}$, $d$ is an antisingleton of $\pi$ if and only if $d$ is a singleton of $\alpha(\pi)$.
Define {\em type a} and {\em type b} elements as in the proof of the Lemma~\ref{lem-halfQ}.

Let $d-\ell$ be the number of type $a$ and type $b$ elements in $\pi$.  
Then $0 \le \ell \le d$ and $\pi_{\ell} \in NC(\ell)$ as defined in Lemma~\ref{lem-halfQ}.  Suppose $\ell \neq 0$, $1$. 
Then $0 \le d-\ell < d-1$.  Summing over all partitions and ways to mark the elements yields
$\sum_{\ell=2}^{d} 2^{d-\ell}{d \choose d-\ell}x^{d-\ell}g(L_\ell,x+1).$

When $\ell=0$, all elements are type $a$ or type $b$ elements.  The total weight for 
each of these partitions is $x^d$.  Let $n$ be the number of blocks of the partition, 
and suppose $n>1$.  The type $b$ elements will determine the position of the blocks of the 
partition since each block ends with a type $b$ element.  Consider the partition in its circular representation,
and pick the location of the type $b$ elements.  
There are $\sum_{n=2}^{d} {d \choose n}=2^d-d-1$ ways to choose these positions.  
Thus, the total weight of all such partitions is $(2^d-d-1)x^d$.  
On the other hand, suppose the partition has only one block.  
Then, obviously, $\pi=(1\ldots d)$ whose weight is $x^d(x+1)$, where 
$(x+1)$ is the weight of the block, which can either be marked or unmarked.

When $\ell=1$, all elements except for one are type $a$ or type $b$ elements.
Suppose the partition has at least two blocks.  Once the unmarked position is 
chosen, there are $2^{d-1}$ ways to arrange the type $a$ and type $b$ elements.
However, if all the $d-1$ marked elements are type $a$, the partition has exactly
one block.  Hence, there are really $2^{d-1}-1$ ways to arrange the type $a$ 
and type $b$ elements, giving a total weight of $d(2^{d-1}-1)x^{d-1}$.
If the partition has one block, then $\pi=(1\ldots d)$, and its weight is $dx^{d-1}(x+1)$.

Summing over all partitions of $NC(d)$ yields a total weight of
\begin{equation*}
\begin{split}
\sum_{\pi \in NC(d)} wt_0'(\pi) &=\left[ x^d(x+1) + (2^d-d-1)x^d \right] + \left[dx^{d-1}(x+1) + d(2^{d-1}-1)x^{d-1} \right]  \\
& \quad \quad \quad \quad +  \sum_{\ell=2}^{d} 2^{d-\ell}{d \choose d-\ell}x^{d-\ell}g(L_\ell,x+1) \\
&=x^{d+1} +\sum_{\ell=0}^{d} 2^{d-\ell}{d \choose \ell}x^{d-\ell}g(L_\ell,x+1) \\
&=Q_{d,0}(x+1).
\end{split}
\end{equation*}
\end{proof}

The next two corollaries follow directly from the result of Theorem~\ref{Thrm-double}; these two properties
are illustrated in Table~\ref{table:Qpoly} for small $d$.
\begin{corollary}
\label{Lem-wt}
The coefficients of $Q_{d,k}(x)$ are nonnegative integers for $0 \le k \le d+1$. 
\end{corollary}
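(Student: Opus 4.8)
The plan is to read off the statement directly from the two explicit weight-enumeration formulas proved in Theorem~\ref{Thrm-double}. Since each $Q_{d,k}(x)$ is expressed (up to the harmless factor of $2$ when $1\le k\le d$) as $\sum_{\pi\in NC(d)} wt_k(\pi)$, it suffices to observe that every summand $wt_k(\pi)$ is, by Definition~\ref{def-wt} and Definition~\ref{def-wtS}, a single monomial $x^e$ with $e\ge 0$: indeed, formula~(\ref{form-wt}) gives $wt_k(\pi)=x^{\block(\pi)+\sing_{\{[k,d]\}}(\pi)+\sing_{\{[1,d-k]\}}(\alpha(\pi))}$, and the three quantities in the exponent are all nonnegative integers. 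Hence $\sum_{\pi\in NC(d)} wt_k(\pi)$ is a polynomial in $x$ with nonnegative integer coefficients, and multiplying by $2$ (in the range $1\le k\le d$) preserves this property. For the boundary cases $k=0$ and $k=d+1$ the same remark applies verbatim, using $wt_{d+1}(\pi)=x^{\block(\pi)}$ and $wt_0(\pi)=x^{|\pi|+\sing(\alpha(\pi))}$.

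In more detail, the single step I would write out is: for each fixed $k$ with $0\le k\le d+1$ and each $\pi\in NC(d)$, the weight $wt_k(\pi)$ equals $x^{a(\pi)}$ for a nonnegative integer $a(\pi)$ depending on $\pi$ and $k$; therefore $\sum_{\pi\in NC(d)} wt_k(\pi)=\sum_{e\ge 0} N_{k,e}\, x^e$ where $N_{k,e}=\#\{\pi\in NC(d): a(\pi)=e\}$ is a nonnegative integer, and only finitely many $N_{k,e}$ are nonzero since $NC(d)$ is finite. By Theorem~\ref{Thrm-double} we then have $Q_{d,k}(x)=2\sum_{e\ge 0} N_{k,e}\,x^e$ for $1\le k\le d$ and $Q_{d,k}(x)=\sum_{e\ge 0} N_{k,e}\,x^e$ for $k\in\{0,d+1\}$, so in all cases the coefficients of $Q_{d,k}(x)$ are nonnegative integers.

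There is essentially no obstacle here: the content is entirely carried by Theorem~\ref{Thrm-double}, and this corollary is just the observation that a generating function which counts a finite combinatorial set by a nonnegative-integer statistic has nonnegative integer coefficients. If one wanted extra care, the only point to double-check is that $wt_k(\pi)$ is genuinely a monomial and not a more complicated polynomial — but this is immediate from Definition~\ref{def-wtS}, where every element or block contributes a factor of either $x$ or $1$, so the product is always a single power of $x$. I would therefore keep the proof to two or three sentences citing Theorem~\ref{Thrm-double} and the monomial nature of $wt_k$.

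\begin{proof}
By Theorem~\ref{Thrm-double}, for $1\le k\le d$ we have
$Q_{d,k}(x)=2\sum_{\pi\in NC(d)}wt_k(\pi)$, and for $k\in\{0,d+1\}$ we have
$Q_{d,k}(x)=\sum_{\pi\in NC(d)}wt_k(\pi)$. By Definition~\ref{def-wtS}
(and Definition~\ref{def-wt}), each factor contributed to $wt_k(\pi)$ by a
singleton element, an antisingleton element, or a nonsingleton block is either
$x$ or $1$; hence $wt_k(\pi)=x^{a_k(\pi)}$ for a nonnegative integer
$a_k(\pi)$. Therefore
$\sum_{\pi\in NC(d)}wt_k(\pi)=\sum_{e\ge 0}N_{k,e}\,x^e$, where
$N_{k,e}=|\{\pi\in NC(d):a_k(\pi)=e\}|$ is a nonnegative integer and only
finitely many $N_{k,e}$ are nonzero since $NC(d)$ is finite. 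Multiplying by the
constant $2$ when $1\le k\le d$ preserves nonnegativity and integrality, so in
every case the coefficients of $Q_{d,k}(x)$ are nonnegative integers.
\end{proof}
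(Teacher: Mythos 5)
Your proof is correct and follows exactly the route the paper intends: the paper states that this corollary ``follows directly from the result of Theorem~\ref{Thrm-double},'' and your argument simply makes explicit the observation that each $wt_k(\pi)$ is a monomial $x^{a_k(\pi)}$ with nonnegative integer exponent, so the total weight is a polynomial with nonnegative integer coefficients.
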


\begin{corollary}
The coefficients of $Q_{d,k}(x)$ are even for $1 \le k \le d$. 
\end{corollary}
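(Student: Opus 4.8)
The plan is to deduce this corollary directly from Theorem~\ref{Thrm-double}, exactly as the surrounding text advertises (``the next two corollaries follow directly''). Theorem~\ref{Thrm-double} states that for $1 \le k \le d$ we have $Q_{d,k}(x) = 2\sum_{\pi \in NC(d)} wt_k(\pi)$. By Definition~\ref{def-wt}, $wt_k(\pi) = wt_{\{[k,d]\}}(\pi)$, and by the explicit formula~(\ref{form-wt}), each $wt_k(\pi)$ is a monomial $x^{\block(\pi)+\sing_{\{[k,d]\}}(\pi)+\sing_{\{[1,d-k]\}}(\alpha(\pi))}$ with coefficient $1$. Hence $\sum_{\pi \in NC(d)} wt_k(\pi)$ is a polynomial in $x$ with nonnegative integer coefficients (Corollary~\ref{Lem-wt} already records this), and multiplying by $2$ produces a polynomial all of whose coefficients are even integers. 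That is the entire content of the statement, so the proof is essentially a single sentence invoking the case $1 \le k \le d$ of Theorem~\ref{Thrm-double}.

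The only thing to be careful about is making the logical dependency explicit: the evenness is a feature of the factor $2$ that appears in the $1 \le k \le d$ branch of the theorem, and does \emph{not} hold for $k=0$ or $k=d+1$, where $Q_{d,k}(x)$ equals $\sum_\pi wt_k(\pi)$ with no factor of $2$ (indeed Table~\ref{table:Qpoly} shows odd coefficients there, e.g.\ the constant term $1$). So the write-up should state the restriction $1 \le k \le d$ and point to the correct branch. I would also cite Corollary~\ref{Lem-wt} (or re-derive in one clause via~(\ref{form-wt})) to certify that the weighted sum $\sum_\pi wt_k(\pi)$ has integer coefficients, since ``even'' is only meaningful once integrality is known.

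There is essentially no obstacle here: the corollary is a formal consequence of a theorem proved immediately above, and the ``hard part'' — establishing the combinatorial identity $Q_{d,k}(x) = 2\sum_\pi wt_k(\pi)$ — has already been carried out in Lemma~\ref{lem-halfQ}, Equation~(\ref{eq:CQ}), and Case~1 of Theorem~\ref{Thrm-double}. A concrete write-up:

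\begin{proof}
By Case~1 of Theorem~\ref{Thrm-double}, for $1 \le k \le d$ we have
$$Q_{d,k}(x) = 2 \sum_{\pi \in NC(d)} wt_k(\pi).$$
By Equation~(\ref{form-wt}), each $wt_k(\pi)$ is a single monomial in $x$ with coefficient $1$, so $\sum_{\pi \in NC(d)} wt_k(\pi)$ is a polynomial with nonnegative integer coefficients (this is also recorded in Corollary~\ref{Lem-wt}). Multiplying by $2$, every coefficient of $Q_{d,k}(x)$ is an even integer.
\end{proof}
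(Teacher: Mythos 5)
Your proof is correct and is exactly the argument the paper intends: the paper gives no separate proof, stating only that both corollaries ``follow directly'' from Theorem~\ref{Thrm-double}, and your invocation of the factor of $2$ in the $1 \le k \le d$ branch, together with integrality of $\sum_{\pi} wt_k(\pi)$ from Equation~(\ref{form-wt}), is precisely that deduction. Your remark that evenness fails for $k=0$ and $k=d+1$ (where the factor of $2$ is absent) is a correct and useful clarification of why the restriction in the statement is needed.
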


\begin{table}[h]
\caption{The polynomials $Q_{d,k}(x)$ for small $d$} % title of Table
\centering % used for centering table
\begin{tabular}{c r r r r r r r} % centered columns (7 columns)
\hline\hline %inserts double horizontal lines
$d$\textbackslash$k$ & 0 & 1 & 2 & 3 & 4 & 5 \\ [0.5ex] % inserts table
%heading
\hline % inserts single horizontal line
0 &  $x$   &  1  & & & &   \\ % inserting body of the table
1 &  $x^2$   &  $2x$  & 1 & & &  \\
2 &  $x^2+x^3$   &  $4x^2$  & $4x$ & $1+x$ & &   \\
3 &  $4x^3+x^4$   &  $2x^2+8x^3$  & $10x^2$ & $8x+2x^2$ & $1+4x$ &   \\
4 &  $2x^3+11x^4+x^5$   &  $12x^3+16x^4$  & $4x^2+24x^3$ & $24x^2+4x^3$ & $16x+12x^2$ & $1+11x+2x^2$  \\ [1ex] % [1ex] adds vertical space
\hline %inserts single line
\end{tabular}
\label{table:Qpoly} % is used to refer this table in the text
\end{table}

%------------------------------------------------------------------------------------------------
\section{A duality for the polynomials $C_{d,i,j}(x)$}
\label{sec:dual}

Suppose $\mathcal{S}=\{[k_1,l_1],\ldots,[k_i,l_i]\}$ as defined in Section~\ref{sec:cint} 
where $[k_n,l_n]\subseteq [1,d]$ for $1\le n \le i$.  Consider also $\mathcal{S}'$,
$[1,d]-\mathcal{S}$, and $[1,d]-\mathcal{S}'$ as defined in Section~\ref{sec:cint}.

\begin{lemma}
\label{lem-general_duality}
Let $\pi \in NC(d)$ and $\mathcal{S}$ as defined above.  
Then $wt_{\mathcal{S}}(\pi) \cdot wt_{\mathcal{S}'}(\alpha(\pi))=x^{d+1}$.
\end{lemma}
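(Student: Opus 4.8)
The plan is to prove this by a direct count of exponents, using the explicit formula \eqref{form-wtS} for both weight functions together with Lemmas~\ref{Lem-sum} and~\ref{Lem-anti}. By \eqref{form-wtS} applied to $\pi$ and $\mathcal S$, and then applied to $\alpha(\pi)$ and $\mathcal S'$ (recalling $\alpha(\alpha(\pi))=\pi$ and $\beta(\beta(\mathcal S))=\mathcal S$, so that $[1,d]-\beta(\mathcal S')=[1,d]-\mathcal S$), we get
$$wt_{\mathcal S}(\pi)\cdot wt_{\mathcal S'}(\alpha(\pi))=x^{E},\qquad E=\big(\block(\pi)+\block(\alpha(\pi))\big)+\big(\sing_{\mathcal S}(\pi)+\sing_{[1,d]-\mathcal S'}(\alpha(\pi))\big)+\big(\sing_{\mathcal S'}(\alpha(\pi))+\sing_{[1,d]-\mathcal S}(\pi)\big).$$
So it suffices to show $E=d+1$. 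First I would handle the three grouped terms. For the first group, Lemma~\ref{Lem-sum} gives $|\pi|+|\alpha(\pi)|=d+1$, i.e.\ $\block(\pi)+\sing(\pi)+\block(\alpha(\pi))+\sing(\alpha(\pi))=d+1$; so $\block(\pi)+\block(\alpha(\pi))=d+1-\sing(\pi)-\sing(\alpha(\pi))$.

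The remaining work is to show that the four $\sing$-terms above reassemble to $\sing(\pi)+\sing(\alpha(\pi))$, i.e.\ that
$$\sing_{\mathcal S}(\pi)+\sing_{[1,d]-\mathcal S}(\pi)=\sing(\pi)+\#\{\text{antisingletons of }\pi\text{ in }\mathcal S\}$$
and symmetrically for $\alpha(\pi)$ with $\mathcal S'$. The key point is that every singleton element of $\pi$ lies either in some interval of $\mathcal S$ (counted by $\sing_{\mathcal S}$) or in some interval of $[1,d]-\mathcal S$ (counted by $\sing_{[1,d]-\mathcal S}$) — these two families of intervals partition $[1,d]$ — so $\sing_{\mathcal S}(\pi)+\sing_{[1,d]-\mathcal S}(\pi)=\sing(\pi)$. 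Combined with Lemma~\ref{Lem-anti} (which identifies antisingletons of $\pi$ in $\mathcal S$ with singletons of $\alpha(\pi)$ in $[1,d]-\mathcal S'$, plus Remark~\ref{anti-d} for the element $d$), this gives $\sing_{\mathcal S}(\pi)+\sing_{[1,d]-\mathcal S'}(\alpha(\pi))=\sing_{\mathcal S}(\pi)+\#\{\text{antisingletons of }\pi\text{ in }\mathcal S\}$; doing the analogous bookkeeping for the other two terms and adding, the antisingleton counts of $\pi$ (in $\mathcal S$ and in $[1,d]-\mathcal S$) should themselves sum to $\sing(\alpha(\pi))$ by a second application of Lemma~\ref{Lem-anti}, and symmetrically. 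Putting $\block(\pi)+\block(\alpha(\pi))+\sing(\pi)+\sing(\alpha(\pi))=d+1$ back in yields $E=d+1$.

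The main obstacle I expect is the careful treatment of the boundary element $d$ (the antisingleton-in-$[1,d]^*$ versus $[1,d]$ distinction) and of the single possibly-wrapped interval: one must check that the correspondence ``$k$ antisingleton of $\pi$ in $\mathcal S$ $\iff$ $d-k$ singleton of $\alpha(\pi)$ in $[1,d]-\mathcal S'$'' holds even when the pair $\{k,k+1\}$ straddles the wrap, using Remark~\ref{anti-d} and the lemma identifying which of $\mathcal S$, $\mathcal S'$ is wrapped. A cleaner alternative, which I would consider if the case analysis gets unwieldy, is to argue geometrically via the $\phi$/$\phi'$ picture: mark on the combined circle $\{1,1',\dots,d,d'\}$ each ``special'' element contributing a factor $x$ to $wt_{\mathcal S}(\pi)$, do the same on the $\alpha(\pi)$-side for $wt_{\mathcal S'}$, and show these two marked sets partition a $(d+1)$-element set — essentially the self-duality of $NC(d)$ refined by the complementation $\phi'(\mathcal S')=\{1,1',\dots,d,d'\}\setminus\phi(\mathcal S)$ — but the exponent-counting route above is more elementary and I would present that first.
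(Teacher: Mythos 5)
Your overall strategy matches the paper's: apply \eqref{form-wtS} to both $\pi$ with $\mathcal S$ and $\alpha(\pi)$ with $\mathcal S'$, collect the six exponents, and reduce everything to $|\pi|+|\alpha(\pi)|=d+1$ via Lemma~\ref{Lem-sum}. However, your intermediate regrouping has a genuine flaw. You want to express $\sing_{[1,d]-\mathcal S'}(\alpha(\pi))+\sing_{\mathcal S'}(\alpha(\pi))$ as ``antisingletons of $\pi$ in $\mathcal S$ plus antisingletons of $\pi$ in $[1,d]-\mathcal S$'' and claim this equals $\sing(\alpha(\pi))$. That fails: an antisingleton $k$ of $\pi$ whose pair $\{k,k+1\}$ straddles the boundary of an interval of $\mathcal S$ (that is, $k=l_n$ is the last element of some $\mathcal S$-interval and $k+1$ falls in the adjacent gap, or the other way around) is counted in neither of your two sets, since the definition requires $\{k,k+1\}$ to be contained \emph{in a single interval} of $\mathcal S$, respectively of $[1,d]-\mathcal S$. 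So ``antisingleton in $\mathcal S$'' and ``antisingleton in $[1,d]-\mathcal S$'' do not partition the antisingletons of $\pi$, and the identity you need does not hold. (The displayed equation $\sing_{\mathcal S}(\pi)+\sing_{[1,d]-\mathcal S}(\pi)=\sing(\pi)+\#\{\text{antisingletons of }\pi \text{ in }\mathcal S\}$ is also wrong as stated — the left side is just $\sing(\pi)$, as you yourself note two sentences later — which suggests you were pairing terms the hard way.)

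The repair is to pair the four singleton-count terms the other way around, which is exactly what the paper does and which requires no antisingleton bookkeeping at all. Four of the six exponent terms are literally singleton counts, two over $\pi$ and two over $\alpha(\pi)$:
$$\sing_{\mathcal S}(\pi)+\sing_{[1,d]-\mathcal S}(\pi)=\sing(\pi),\qquad \sing_{\mathcal S'}(\alpha(\pi))+\sing_{[1,d]-\mathcal S'}(\alpha(\pi))=\sing(\alpha(\pi)),$$
both being trivial because the intervals of $\mathcal S$ and of $[1,d]-\mathcal S$ together cover $[1,d]$ (and similarly for $\mathcal S'$). Adding $\block(\pi)+\block(\alpha(\pi))$, you immediately get $|\pi|+|\alpha(\pi)|=d+1$ and you are done. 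This sidesteps both of the obstacles you flagged (the wrapped interval and the special treatment of $d$): all the delicate antisingleton reasoning was already absorbed once and for all into the derivation of \eqref{form-wtS}, and it is a mistake to unwind it here. Your ``cleaner alternative'' using the $\phi/\phi'$ complementation picture is also sound and would work, but the direct regrouping above is even simpler.
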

\begin{proof}
By Equation (\ref{form-wtS}),  
$wt_{\mathcal{S}}(\pi)=x^{\block(\pi)} \cdot x^{\sing_{\mathcal{S}}(\pi)} \cdot x^{\sing_{[1,d]-\mathcal{S}'}(\alpha(\pi))}$,
and $wt_{\mathcal{S}'}(\alpha(\pi))=x^{\block(\alpha(\pi))} \cdot x^{\sing_{\mathcal{S}'}(\alpha(\pi))} \cdot x^{\sing_{[1,d]-\mathcal{S}}(\pi)}$.
By definition, $\sing(\pi)=\sing_{\mathcal{S}}(\pi)+\sing_{[1,d]-\mathcal{S}}(\pi)$.
Using these definitions and applying Lemma~\ref{Lem-sum} yields
\begin{equation*}
\begin{split}
wt_{\mathcal{S}}(\pi) \cdot wt_{\mathcal{S}'}(\alpha(\pi))&=x^{\block(\pi)} \cdot x^{\sing_{\mathcal{S}}(\pi)} \cdot x^{\sing_{[1,d]-\mathcal{S}'}(\alpha(\pi))} \\
& \quad \quad \quad \cdot x^{\block(\alpha(\pi))} \cdot x^{\sing_{\mathcal{S}'}(\alpha(\pi))} \cdot x^{\sing_{[1,d]-\mathcal{S}}(\pi)} \\
&=x^{\block(\pi)+\sing(\pi)+\sing(\alpha(\pi))+\block(\alpha(\pi))} \\
&=x^{|\pi|+|\alpha(\pi)|}=x^{d+1}. 
\end{split}
\end{equation*}
\end{proof}

An immediate consequence of Lemma~\ref{lem-general_duality} is that if $wt_{\mathcal{S}}(\pi)=x^k$ for some $k$, then 
$wt_{\mathcal{S}'}(\alpha(\pi))=x^{d+1-k}$.  Note, Lemma~\ref{lem-general_duality} also holds for 
$\mathcal{S}=\emptyset$ (with $\mathcal{S}'=\{[1,d]^*\}$) and for $\mathcal{S}=\{[1,d]^*\}$
(where $\mathcal{S}'=\emptyset$) since $\sing_{\emptyset}(\pi)=0$ and $\sing_{\{[1,d]^*\}}(\pi)=\sing(\pi)$.

\begin{theorem}
\label{thrm-gen_duality}
Let $i>0$ and $0\le j \le d-i$.  Then $[x^k]C_{d,i,j}(x)=[x^{d+1-k}]C_{d,i,d-i-j}(x)$ for $0\le k \le d+1$.
\end{theorem}
\begin{proof}
By Theorem~\ref{thrm-cwt}, $C_{d,i,j}(x)=\sum_{\pi \in NC(d)}wt_{\mathcal{S}}(\pi)$ for some 
set $\mathcal{S}$ of pairwise disjoint intervals of $[1,d]$ defined as in Section~\ref{sec:cint}.  
Recall $\mathcal{S}$ consists of $i$ intervals and contains a total of $d-j$ elements in its intervals.
Define $\mathcal{S}'$ as before.  Then $\mathcal{S}'$ has $i$ intervals and contains a total of $i+j$ elements in 
its intervals.  Pick any $\pi \in NC(d)$, and let $k\ge 0$ be such that $wt_{\mathcal{S}}(\pi)=x^k$.  Define $\alpha(\pi)$ 
as before.  By Lemma~\ref{lem-general_duality}, $wt_{\mathcal{S}'}(\alpha(\pi))=x^{d+1-k}$. 
Thus, $[x^k]\sum_{\pi \in NC(d)}wt_{\mathcal{S}}(\pi) = [x^{d+1-k}]\sum_{\pi \in NC(d)}wt_{\mathcal{S}'}(\alpha(\pi))$,
which implies $[x^k]C_{d,i,j}(x)=[x^{d+1-k}]C_{d,i,d-i-j}(x)$ 
since $C_{d,i,d-i-j}(x)=\sum_{\pi \in NC(d)}wt_{\mathcal{S}'}(\alpha(\pi))$.
\end{proof}

Using these results, a similar duality result for the polynomials
$Q_{d,k}(x)$ may easily be shown.

\begin{lemma}
\label{Lem-end}
For $0 \le \ell \le \lfloor d/2 \rfloor$, $[x^\ell]Q_{d,d+1}(x)=[x^{d+1-\ell}]Q_{d,0}(x)$. 
\end{lemma}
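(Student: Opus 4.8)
The plan is to read off both coefficients as counts of noncrossing partitions by number of nonsingleton blocks, and then to match the two counts via the Simion--Ullman involution $\alpha$, in exactly the spirit of the proof of Theorem~\ref{thrm-gen_duality} but applied to the degenerate pair of families $\mathcal{S}=\emptyset$ and $\mathcal{S}'=\{[1,d]^*\}$.

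First I would invoke Theorem~\ref{Thrm-double}, which gives $Q_{d,d+1}(x)=\sum_{\pi\in NC(d)}wt_{d+1}(\pi)$ and $Q_{d,0}(x)=\sum_{\pi\in NC(d)}wt_0(\pi)$. By Definition~\ref{def-wt} and the remark immediately after it, $wt_{d+1}(\pi)=x^{\block(\pi)}$, while $wt_0(\pi)=x^{|\pi|+\sing(\alpha(\pi))}=x^{d+1-\block(\alpha(\pi))}$, the last equality being Lemma~\ref{Lem-sum}. Consequently $[x^\ell]Q_{d,d+1}(x)$ is the number of $\pi\in NC(d)$ with $\block(\pi)=\ell$, and $[x^{d+1-\ell}]Q_{d,0}(x)$ is the number of $\pi\in NC(d)$ with $\block(\alpha(\pi))=\ell$.

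Next I would use that $\alpha$ is an involution, hence a bijection, of $NC(d)$: the substitution $\sigma=\alpha(\pi)$ shows that $\#\{\pi\in NC(d):\block(\alpha(\pi))=\ell\}=\#\{\sigma\in NC(d):\block(\sigma)=\ell\}$, and the two coefficients therefore agree. (Equivalently one may quote Lemma~\ref{lem-general_duality} in the case $\mathcal{S}=\emptyset$, $\mathcal{S}'=\{[1,d]^*\}$, for which the text explicitly notes the lemma still holds: if $wt_{d+1}(\pi)=x^\ell$ then $wt_0(\alpha(\pi))=x^{d+1-\ell}$, and summing over the bijection $\pi\mapsto\alpha(\pi)$ yields the claim.) There is essentially no obstacle in this argument; the only point worth a sentence of care is the hypothesis $0\le\ell\le\lfloor d/2\rfloor$, which is precisely the range in which $[x^\ell]Q_{d,d+1}(x)$ can be nonzero, since a noncrossing partition of $[1,d]$ has at most $\lfloor d/2\rfloor$ nonsingleton blocks; outside this range both sides vanish, so the restriction merely sharpens the statement and keeps the exponent $d+1-\ell$ inside the actual support of $Q_{d,0}(x)$.
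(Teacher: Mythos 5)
Your proof is correct, but it takes a genuinely different route from the paper's. The paper proves this lemma purely algebraically: it quotes the identity, established by Stanley in the proof of Proposition~2.6 of~\cite{Stanley-hvector}, that $x^{d+1}g(L_d,1/x)=(x-1)^{d+1}+\sum_{\ell=0}^{d}2^{d-\ell}\binom{d}{\ell}(x-1)^{d-\ell}g(L_\ell,x)$, which by Equation~(\ref{form:Qpoly}) reads $x^{d+1}Q_{d,d+1}(1/x)=Q_{d,0}(x)$, and this immediately gives the coefficient identity. You instead deduce the statement from the combinatorial model of Theorem~\ref{Thrm-double} together with the self-duality of the noncrossing partition lattice: both $[x^\ell]Q_{d,d+1}(x)$ and $[x^{d+1-\ell}]Q_{d,0}(x)$ are shown to count noncrossing partitions with exactly $\ell$ nonsingleton blocks, where the second count passes through the Simion--Ullman involution $\alpha$ (or equivalently the degenerate case $\mathcal{S}=\emptyset$, $\mathcal{S}'=\{[1,d]^*\}$ of Lemma~\ref{lem-general_duality}). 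There is no circularity, since Theorem~\ref{Thrm-double} is established independently of this lemma. Each approach buys something: the paper's citation to Stanley is short and self-contained within Section~\ref{sec:Qpoly}, requiring none of the later combinatorics, whereas your argument is internal to the paper's noncrossing-partition framework and in fact recovers Stanley's palindromicity identity $x^{d+1}Q_{d,d+1}(1/x)=Q_{d,0}(x)$ as a byproduct of the involution, which better matches the spirit of Section~\ref{sec:dual} as a whole. Your remark on the range $0\le\ell\le\lfloor d/2\rfloor$ is also the same observation the paper makes just after its one-line proof.
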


In the proof of Proposition 2.6 in~\cite{Stanley-hvector}, Stanley showed that 
$$x^{d+1}g(L_d,1/x)=(x-1)^{d+1} + \sum_{\ell=0}^{d} 2^{d-\ell}{d \choose \ell}(x-1)^{d-\ell}g(L_\ell,x).$$
Reinterpreting this in terms of the polynomials $Q_{d,k}(x)$ yields 
$x^{d+1}Q_{d,d+1}(1/x)=Q_{d,0}(x)$, which gives the desired result.  
Additionally, if $ \ell>\lfloor d/2 \rfloor$, then $[x^\ell]Q_{d,d+1}(x)=0=[x^{d+1-\ell}]Q_{d,0}(x)$.

\begin{lemma}
\label{Lem-duality}
Let $\pi \in NC(d)$ and $0 \le k \le d+1$, then
$wt_k(\pi) \cdot wt_{d+1-k}(\alpha(\pi))=x^{d+1}$.  
\end{lemma}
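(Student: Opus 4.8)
The plan is to reduce Lemma~\ref{Lem-duality} to the general duality already proved in Lemma~\ref{lem-general_duality}. The first task is to recognize, for $1\le k\le d$, which family of intervals is associated to the abbreviated weight function $wt_k=wt_{\{[k,d]\}}$ of Definition~\ref{def-wt}. Using the bijection $\phi$ with subsets of $\{1,1',\ldots,d,d'\}$ (or, equivalently, comparing formulas~(\ref{form-wtS}) and~(\ref{form-wt}), which forces $[1,d]-\mathcal{S}'=\{[1,d-k]\}$), one finds that for $\mathcal{S}=\{[k,d]\}$ the associated family is $\mathcal{S}'=\{[d+1-k,d]\}$; note $\mathcal{S}$ here has $l_i=d$, so neither $\mathcal{S}$ nor $\mathcal{S}'$ is wrapped, and by Definition~\ref{def-wt} the weight function $wt_{\mathcal{S}'}$ is exactly $wt_{d+1-k}$. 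Granting this identification, Lemma~\ref{lem-general_duality} applied with $\mathcal{S}=\{[k,d]\}$ gives $wt_k(\pi)\cdot wt_{d+1-k}(\alpha(\pi))=wt_{\mathcal{S}}(\pi)\cdot wt_{\mathcal{S}'}(\alpha(\pi))=x^{d+1}$ in one stroke.

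Alternatively, to keep the argument self-contained, one can compute directly from~(\ref{form-wt}). For $1\le k\le d$ it reads
\[
wt_k(\pi)=x^{\block(\pi)+\sing_{\{[k,d]\}}(\pi)+\sing_{\{[1,d-k]\}}(\alpha(\pi))}.
\]
Since $\alpha$ is an involution and $1\le d+1-k\le d$, applying the same formula with $k$ replaced by $d+1-k$ and $\pi$ by $\alpha(\pi)$ yields
\[
wt_{d+1-k}(\alpha(\pi))=x^{\block(\alpha(\pi))+\sing_{\{[d+1-k,d]\}}(\alpha(\pi))+\sing_{\{[1,k-1]\}}(\pi)}.
\]
Because $[1,d]$ is the disjoint union of the intervals $[1,k-1]$ and $[k,d]$, and also of $[1,d-k]$ and $[d+1-k,d]$ (with the convention that $[1,0]$ is empty), adding exponents gives $\sing(\pi)$ from the first pair of summands and $\sing(\alpha(\pi))$ from the second. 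Multiplying the two displays and using $|\pi|=\block(\pi)+\sing(\pi)$, the total exponent collapses to $|\pi|+|\alpha(\pi)|$, which equals $d+1$ by Lemma~\ref{Lem-sum}.

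It remains to settle the boundary cases $k=0$ and $k=d+1$, which I would handle from the closed forms recorded just after Definition~\ref{def-wt}, namely $wt_{d+1}(\pi)=x^{\block(\pi)}$ and $wt_0(\pi)=x^{|\pi|+\sing(\alpha(\pi))}$. For $k=0$, using once more that $\alpha$ is an involution,
\[
wt_0(\pi)\cdot wt_{d+1}(\alpha(\pi))=x^{|\pi|+\sing(\alpha(\pi))+\block(\alpha(\pi))}=x^{|\pi|+|\alpha(\pi)|}=x^{d+1},
\]
and the case $k=d+1$ follows by applying this identity with $\alpha(\pi)$ in place of $\pi$. Equivalently, these are precisely the instances $\mathcal{S}=\{[1,d]^*\}$ (so $\mathcal{S}'=\emptyset$) and $\mathcal{S}=\emptyset$ (so $\mathcal{S}'=\{[1,d]^*\}$) of Lemma~\ref{lem-general_duality}.

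The only step requiring genuine care is the identification $\mathcal{S}'=\{[d+1-k,d]\}$ (equivalently $[1,d]-\mathcal{S}'=\{[1,d-k]\}$): one must keep straight the primed versus unprimed labeling convention of the Simion--Ullman construction and correctly treat the degenerate complementary interval when $k=1$ or $k=d$, as well as the status of $d$ as an antisingleton. Everything else is a routine substitution, so I expect this to be the main (and only modest) obstacle.
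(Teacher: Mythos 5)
Your proposal is correct and follows essentially the same path as the paper: for $1 \le k \le d$ you reduce to Lemma~\ref{lem-general_duality} by identifying that $\mathcal{S}=\{[k,d]\}$ has $\mathcal{S}'=\{[d+1-k,d]\}$, exactly the step the paper's proof takes. The only minor difference is in the boundary cases $k=0$ and $k=d+1$: the paper tersely refers to the proof of Lemma~\ref{Lem-end} and the definition of $wt_k$, while you compute directly from the closed forms $wt_0(\pi)=x^{|\pi|+\sing(\alpha(\pi))}$, $wt_{d+1}(\pi)=x^{\block(\pi)}$ and Lemma~\ref{Lem-sum} (equivalently, you could invoke the remark after Lemma~\ref{lem-general_duality} extending it to $\mathcal{S}=\emptyset$ and $\mathcal{S}=\{[1,d]^*\}$); your version is actually cleaner since a per-partition identity does not follow formally from the polynomial identity in Lemma~\ref{Lem-end} alone.
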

\begin{proof}
If $k=0$ or $k=d+1$, the result follows directly from the proof of 
Lemma~\ref{Lem-end} and the definition of $wt_k(\pi)$.
Suppose $1\le k \le d$ and $\pi \in NC(d)$.  Let $\mathcal{S}=\{[k,d]\}$.  
Then $\mathcal{S}'=\{[d+1-k,d]\}$.  By Definition~\ref{def-wt} and Lemma~\ref{lem-general_duality}, 
$wt_k(\pi)\cdot wt_{d+1-k}(\alpha(\pi))=wt_{\mathcal{S}}(\pi)\cdot wt_{\mathcal{S}'}(\alpha(\pi))=x^{d+1}$.
\end{proof}

A direct consequence of Lemma~\ref{Lem-duality} is that if $\pi \in NC(d)$ where $wt_k(\pi)=x^\ell$ for
some $\ell$ then $wt_{d+1-k}(\alpha(\pi))=x^{d+1-\ell}$.

\begin{theorem}
\label{Thrm-duality}
Let $0 \le k \le d+1$.  Then $[x^\ell]Q_{d,k}(x)=[x^{d+1-\ell}]Q_{d,d+1-k}(x)$  for $0 \le \ell \le d+1$.
\end{theorem}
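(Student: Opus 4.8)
The plan is to read off the statement directly from the combinatorial interpretation in Theorem~\ref{Thrm-double} and then pair up noncrossing partitions using the Simion--Ullman involution $\alpha$ as packaged in Lemma~\ref{Lem-duality}. First I would record the normalization uniformly: by Theorem~\ref{Thrm-double} there is a constant $\varepsilon_k\in\{1,2\}$ with $Q_{d,k}(x)=\varepsilon_k\sum_{\pi\in NC(d)}wt_k(\pi)$, where $\varepsilon_k=1$ if $k\in\{0,d+1\}$ and $\varepsilon_k=2$ if $1\le k\le d$. The key elementary observation is that $k\in\{0,d+1\}$ if and only if $d+1-k\in\{0,d+1\}$, so $\varepsilon_k=\varepsilon_{d+1-k}$ for every $k$; this is the only place where the factor-$2$ bookkeeping matters, and it settles that the two sides of the claimed identity carry the same normalization. (Equivalently, one may just quote Lemma~\ref{Lem-end} for the two boundary values $k=0$ and $k=d+1$ and treat $1\le k\le d$ separately, but the uniform phrasing is cleaner.)

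Next, for a fixed $k$ with $0\le k\le d+1$, Lemma~\ref{Lem-duality} gives $wt_k(\pi)\cdot wt_{d+1-k}(\alpha(\pi))=x^{d+1}$ for every $\pi\in NC(d)$; hence if $wt_k(\pi)=x^\ell$ then $wt_{d+1-k}(\alpha(\pi))=x^{d+1-\ell}$. Since $\alpha$ is an involution on $NC(d)$, it is in particular a bijection, so it restricts to a bijection between $\{\pi\in NC(d):wt_k(\pi)=x^\ell\}$ and $\{\sigma\in NC(d):wt_{d+1-k}(\sigma)=x^{d+1-\ell}\}$. Consequently the coefficient of $x^\ell$ in $\sum_{\pi}wt_k(\pi)$, which is the cardinality of the first set, equals the coefficient of $x^{d+1-\ell}$ in $\sum_{\sigma}wt_{d+1-k}(\sigma)$, which is the cardinality of the second set.

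Finally I would multiply this coefficient identity by $\varepsilon_k=\varepsilon_{d+1-k}$ and invoke Theorem~\ref{Thrm-double} on both sides to conclude $[x^\ell]Q_{d,k}(x)=[x^{d+1-\ell}]Q_{d,d+1-k}(x)$ for all $0\le\ell\le d+1$ (and both sides are $0$ for $\ell$ outside that range, since $Q_{d,k}$ has degree at most $d+1$). I do not expect a real obstacle: all the substance is already in Lemma~\ref{Lem-duality} and the involutivity of $\alpha$, and the proof is essentially a one-line consequence once the factor-$2$ normalization in Theorem~\ref{Thrm-double} is seen to be symmetric under $k\mapsto d+1-k$; the only point requiring a moment's attention is exactly that symmetry of $\varepsilon_k$.
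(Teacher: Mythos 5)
Your proof is correct, but it takes a slightly different route from the one printed in the paper. The paper splits into two cases: for $k=0$ or $k=d+1$ it invokes Lemma~\ref{Lem-end}, and for $1\le k\le d$ it sets $\mathcal{S}=\{[k,d]\}$, applies Theorem~\ref{thrm-gen_duality} with $i=1$ to get $[x^\ell]C_{d,1,k-1}(x)=[x^{d+1-\ell}]C_{d,1,d-k}(x)$, and then converts back via Equation~(\ref{eq:CQ}) ($Q_{d,j+1}=2\,C_{d,1,j}$). You instead read everything off Theorem~\ref{Thrm-double} and Lemma~\ref{Lem-duality}, handling all $k$ in a single uniform argument after observing that the normalization factor $\varepsilon_k\in\{1,2\}$ is invariant under $k\mapsto d+1-k$. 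Both arguments rest on the same engine — the Simion--Ullman involution $\alpha$ as captured by Lemma~\ref{lem-general_duality}, of which Lemma~\ref{Lem-duality} is the $i=1$ instance — so there is no difference in substance. In fact the paper anticipates your route: the remark immediately following Theorem~\ref{Thrm-duality} notes that it ``could also be shown directly by specializing the proof of Theorem~\ref{thrm-gen_duality}.'' Your version is a bit more self-contained since it avoids the detour through the $C_{d,i,j}$ notation; the paper's version reuses Theorem~\ref{thrm-gen_duality}, which it has already established and wants to keep in the foreground. One minor stylistic point: the uniform $\varepsilon_k$ bookkeeping is clean, but it is no real simplification over the paper's two-case split, since the boundary case $k\in\{0,d+1\}$ still needs the separate statement of Lemma~\ref{Lem-duality} (or Lemma~\ref{Lem-end}) to be checked; you correctly acknowledge this.
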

\begin{proof}
When $k$ equals $0$ or $d+1$, apply Lemma~\ref{Lem-end}.  
If $1 \le k \le d$, let $\mathcal{S}=\{[k,d]\}$; then $\mathcal{S}'=\{[d+1-k,d]\}$. 
Apply Theorem~\ref{thrm-gen_duality} to get $[x^\ell]C_{d,1,k-1}(x)=[x^{d+1-\ell}]C_{d,1,d-k}(x)$.
Thus, $[x^\ell]Q_{d,k}(x)=[x^{d+1-\ell}]Q_{d,d+1-k}(x)$ since, by
Equation (\ref{eq:CQ}), 
$Q_{d,d+1-k}(x)=2\cdot C_{d,1,d-k}(x)$.
\end{proof}

\begin{remark}
For any $d$-dimensional cubical sphere $\mathcal{P}$, the face poset is
Eulerian.  Thus, the toric $h$ polynomial
satisfies the generalized Dehn-Sommerville equations
$h(\mathcal{P},x)=x^{d+1}h(\mathcal{P},1/x)$~\cite{Stanley-hvector}.   
In terms of the polynomials $Q_{d,k}(x)$ this statement is the same as
$$\sum_{k=0}^{d+1} h_k Q_{d,k}(x)=x^{d+1}\sum_{k=0}^{d+1} h_k
Q_{d,k}\left(\frac{1}{x}\right).$$
Adin~\cite{Adin} showed that the Dehn-Sommerville
equations  holding for $\mathcal{P}$ may be restated as $h_k=h_{d+1-k}$
holding for the Adin $h$-vector.  Combining these
produces  
$$
\sum_{k=0}^{d+1} h_k Q_{d,k}(x)=\sum_{k=0}^{d+1} h_k x^{d+1}
Q_{d,d+1-k}\left(\frac{1}{x}\right).
$$ 
Since the cubical Dehn-Sommerville equations are a complete set of
linear relations even for cubical polytopes~\cite{Grunbaum},  
we know that $h_0,\ldots,h_{\lfloor \frac{d+1}{2} \rfloor}$
are linearly independent. Comparing the contributions of $h_k$ and
$h_{d+1-k}$ on both sides of the last equation yields
\begin{equation}
\label{eq:wdual}
Q_{d,k}(x)+Q_{d,d+1-k}(x)
=x^{d+1}\left(Q_{d,d+1-k}
  \left(\frac{1}{x}\right)+Q_{d,k}\left(\frac{1}{x}\right)\right). 
\end{equation} 
Conversely, it is not difficult to show that the Dehn-Sommerville
equations, stated for the Adin $h$-vector, and Equation (\ref{eq:wdual})
imply $f(\mathcal{P},x)=x^{d+1}f(\mathcal{P},1\slash x)$. Equation
(\ref{eq:wdual}) is a direct consequence of Theorem~\ref{Thrm-duality},
but Theorem~\ref{Thrm-duality} can not be derived from it. 
\end{remark}

\begin{remark}
Theorem~\ref{Thrm-duality} could also be shown directly by specializing
the proof of Theorem~\ref{thrm-gen_duality}. After that
Theorem~\ref{thrm-gen_duality} can be proved by combining
Theorem~\ref{Thrm-duality} and Equation (\ref{equ-c}) as follows: 
\begin{equation*}
\begin{split}
x^{d+1}C_{d,i,d-i-j}(1\slash x)&=\sum_{k=d+1-i-j}^{d-j}\left(\frac{1}{2}\right)^i {i-1 \choose k-1-d+i+j}x^{d+1}Q_{d,k}(1\slash x) \\
&=\sum_{k=d+1-i-j}^{d-j}\left(\frac{1}{2}\right)^i {i-1 \choose k-1-d+i+j} Q_{d,d+1-k}(x). \\
\end{split}
\end{equation*}
Set $\ell=d+1-k$ to get
$$x^{d+1}C_{d,i,d-i-j}(1\slash x)=\sum_{\ell=j+1}^{i+j}\left(\frac{1}{2}\right)^i {i-1 \choose i+j-\ell} Q_{d,\ell}(x)=C_{d,i,j}(x).$$
\end{remark}

\section*{Acknowledgments}
This work was partially supported by a grant from the Simons Foundation
(\#245153 to G\'abor Hetyei).

%-------------------------------------------------------------------------------------------

\end{document}